\documentclass[a4paper,oneside,10pt]{amsart}

\usepackage{a4wide}
\usepackage[T1]{fontenc}
\usepackage[ansinew]{inputenc}
\usepackage{lmodern} 
\usepackage{graphicx}
\usepackage{amsmath}
\allowdisplaybreaks
\usepackage{amsthm}
\usepackage{amsfonts}
\usepackage{amssymb}
\usepackage{setspace}
\usepackage{mathrsfs}
\usepackage[all]{xy}
\usepackage{enumerate}
\usepackage{xcolor}
\usepackage{catchfile}
\usepackage{autobreak}

\theoremstyle{plain}

\newtheorem{theorem}{Theorem}[section]

\newtheorem{proposition}[theorem]{Proposition}
 \newtheorem{lemma}[theorem]{Lemma}

\theoremstyle{definition}
\newtheorem{remark}[theorem]{Remark}

\newtheorem{assumption}[theorem]{Assumption}

 \newtheorem{definition}[theorem]{Definition}

\newtheorem*{proposition*}{Proposition}
\newtheorem*{definition*}{Definition}
\numberwithin{equation}{section}
\theoremstyle{plain}
\newtheorem*{theorem*}{Theorem}

\newenvironment{abc}{\begin{enumerate}[{\rm (a)}]}{\end{enumerate}}
\newenvironment{num}{\begin{enumerate}[{\rm 1.}]}{\end{enumerate}}
\newenvironment{iiv}{\begin{enumerate}[{\rm (i)}]}{\end{enumerate}}

\def\dom{\mathrm{D}}
\def\sg#1{(#1(t))_{t\geq0}}
\def\semis{\mathscr{P}}
\def\dd{\mathrm{d}}
\def\res{\mathrm{R}}

\def\ee{\mathrm{e}}
\def\RR{\mathbb{R}}
\def\AL{\mathrm{AL}}
\def\NN{\mathbb{N}} 
\def\LLL{\mathscr{L}}
\def\M{\mathrm{M}}

\def\BBB{\mathscr{B}}

\def\BC{\mathrm{C}_{\mathrm{b}}}
\def\BC{\mathrm{C}_{\mathrm{b}}}
\def\tlim{\mathop{\tau\lim}}
\def\tlimi{\mathop{\tau\liminf}}

\setlength{\parindent}{0em} 

\makeatletter
\newenvironment{proofof}[1]{\par
	\pushQED{\qed}%
	\normalfont \topsep6\p@\@plus6\p@\relax
	\trivlist
	\item[\hskip\labelsep
				\bfseries
		Proof of #1\@addpunct{.}]\ignorespaces		
}{%
	\popQED\endtrivlist\@endpefalse
}
\makeatother

\begin{document}
\title{Positive Miyadera--Voigt perturbations of bi-continuous semigroups} 

\author{Christian Budde}
\address{North-West University, School of Mathematical and Statistical Sciences, Potchefstroom Campus, Private Bag X6001, Potchefstroom 2520, South Africa}
\email{Christian.Budde@nwu.ac.za}


\begin{abstract}                                                                         
We discuss positive Miyadera--Voigt type perturbations for bi-continuous semigroups on $\AL$-spaces with an additional locally convex topology generated by additive seminorms. Our main example is the space of bounded Borel measures.
\end{abstract}

\thanks{The author was supported by the DAAD-TKA Project 308019 ``\emph{Coupled systems and innovative time integrators}''.}
\keywords{bi-continuous semigroups, positivity, Miyadera--Voigt type perturbation, Skorohod differentiable measures}
\subjclass[2010]{47D03, 47A55, 34G10, 46A70, 46A40}

\date{}
\maketitle

\section*{Introduction}

\noindent Various models of physical processes ask for positive solutions in order to have a reasonable interpretation, e.g., consider solutions containing the absolute temperature or a density. The maximum principle for elliptic and parabolic partial differential equations guarantees positive solutions under positive initial data. This demonstrates the importance of positivity in the theory of operator semigroups on Banach spaces, which in fact appear as solutions of a special class of PDE's, the so called evolution equations which in turn can be rewritten as abstract Cauchy problems on a Banach space. The fundamental concepts in this context such as vector lattices, Banach lattices and positive operators are studied in detail for example in \cite{Schaefer1974} and \cite{PMN1991}. Positive operator semigroups are treated for example by Arendt et al. \cite{Positive1986} and more recently by B\'{a}tkai, Kramar-Fijav\v{z} and Rhandi \cite{Positive2017}. Perturbations of $C_0$-semigroup are discussed by several authors for example Engel and Nagel \cite[Chapter III, Sect.~3]{EN}, Voigt \cite{Voigt1977}, Desch and Schappacher \cite{DS1984}, Bombieri \cite{BombieriPhD} and Adler, Bombieri and Engel in \cite{ABE2014}. 

\medskip
\noindent Markov processes associated to stochastic differential equations or jointly continuous flows on metric spaces give rise to semigroups which are in general not strongly continuous with respect to the norm of the Banach space the semigroup is working on, cf. \cite[Sect.~2.5]{LB} and \cite[Sect.~3.2]{KuPHD}, but they enjoy strong continuity with respect to a weaker additional locally convex topology on the Banach space. The theory of bi-continuous semigroups was introduced by K\"uhnemund \cite{KuPHD} and was further studied by Farkas \cite{FaPhD}. In this setting there are perturbation results by Farkas \cite{FaSF,FaStud} and recentely by Budde and Farkas \cite{BF2}. Positivity in this context makes an appearance in \cite{ESF2005}.

\medskip This paper is inspired by an article by Voigt \cite{Voigt1989}, where positive operator semigroups and perturbations are combined and the following perturbation result for positive $C_0$-semigroups was proved.

\begin{theorem*}{\cite[Thm.~0.1]{Voigt1989}}
Let $E$ be an $\AL$-space, and let $A$ be the generator of a positive $C_0$-semigroup on $E$. Let $B:\dom(A)\rightarrow E$ be a positive operator, and assume that $A+B$ is resolvent positive. Then $A+B$ is the generator of a positive $C_0$-semigroup.
\end{theorem*}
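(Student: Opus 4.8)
The plan is to obtain the semigroup generated by $A+B$ as the sum of the Dyson--Phillips series built from $A$ and $B$, using the geometry of the $\AL$-space $E$ to force that series to converge. Denote by $\sg{T}$ the positive $C_0$-semigroup generated by $A$ and by $\phi$ the positive functional on $E$ with $\phi(x)=\|x\|$ for every $x\geq 0$; recall that on an $\AL$-space a positive operator $S$ satisfies $\|S\|=\min\{c\geq 0:\phi(Sx)\leq c\,\phi(x)\text{ for all }x\geq 0\}$, and that $E$ has order continuous norm, so that norm-bounded monotone nets of positive vectors converge. First I would normalise: neither the hypothesis nor the conclusion changes under $A\mapsto A-\nu$, so, taking $\nu$ beyond both the type of $\sg{T}$ and the abscissa of resolvent positivity of $A+B$, I may assume that $\sg{T}$ is a positive contraction semigroup and that $\res(\lambda,A+B)\geq 0$ for all $\lambda>0$. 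Since $\operatorname{ran}\res(\lambda,A)=\dom(A)\subseteq\dom(B)$, the operator $B\res(\lambda,A)$ is everywhere defined and positive, hence automatically bounded, and $\phi(Ay)\leq 0$, $\phi(\res(\lambda,A)y)\leq\lambda^{-1}\phi(y)$ for $y\in\dom(A)$ with $y\geq 0$.

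From the second resolvent identity one has, for $\lambda>0$ and $N\in\NN$,
\[
\res(\lambda,A+B)=\sum_{k=0}^{N}\bigl(\res(\lambda,A)B\bigr)^{k}\res(\lambda,A)+\bigl(\res(\lambda,A)B\bigr)^{N+1}\res(\lambda,A+B),
\]
where the remainder is positive because each factor is. Hence the partial sums form an increasing sequence of positive operators, dominated in the order by $\res(\lambda,A+B)$; by order continuity of the norm they converge strongly, and a short argument (factoring the remainder through $B\res(\lambda,A+B)x\geq 0$) shows the remainder tends to $0$, so that
\[
\res(\lambda,A+B)=\sum_{k=0}^{\infty}\bigl(\res(\lambda,A)B\bigr)^{k}\res(\lambda,A)\qquad(\lambda>0),
\]
a strongly convergent series of positive operators.

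Next I would introduce the Dyson--Phillips iterates $S_{0}(t):=T(t)$ and $S_{k+1}(t)x:=\int_{0}^{t}T(t-s)\,B\,S_{k}(s)x\,\dd s$, defined first for $x$ in the core $\dom(A)$, on which $S_{k}(s)$ maps into $\dom(A)$ by induction. Each $S_{k}(t)$ is positive, and taking Laplace transforms turns the convolution recursion into a product recursion, so the Laplace transform of $\sum_{k=0}^{N}S_{k}(\cdot)$ equals $\sum_{k=0}^{N}\bigl(\res(\lambda,A)B\bigr)^{k}\res(\lambda,A)$. Therefore, for $x\geq 0$ and every $N$,
\[
\int_{0}^{\infty}\ee^{-\lambda t}\,\phi\!\Bigl(\sum_{k=0}^{N}S_{k}(t)x\Bigr)\dd t\leq\phi\bigl(\res(\lambda,A+B)x\bigr)<\infty ,
\]
and monotone convergence shows that $S(t)x:=\sum_{k=0}^{\infty}S_{k}(t)x$ converges for almost every $t$ and, once locally uniform control in $t$ is obtained, for every $t$, defining a positive operator with Laplace transform $\res(\lambda,A+B)$. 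The convolution identity $S_{k}(t+s)=\sum_{j=0}^{k}S_{j}(t)S_{k-j}(s)$ yields the semigroup law $S(t+s)=S(t)S(s)$; if in addition $\sg{S}$ is strongly continuous it is a positive $C_0$-semigroup with resolvent $\res(\lambda,A+B)$, hence with generator $A+B$, and positivity is built in.

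The main obstacle is exactly this last step: passing from ``$\sum_{k}S_{k}(t)x$ converges on $\dom(A)$ with the correct Laplace transform'' to ``$\sg{S}$ is a genuine $C_0$-semigroup'' --- that is, showing the convergence is locally uniform in $t$ (so that $S(t)$ is bounded and $t\mapsto S(t)x$ continuous), handling the domain bookkeeping forced by the unboundedness of $B$, and extending the identity $\int_{0}^{\infty}\ee^{-\lambda t}S(t)x\,\dd t=\res(\lambda,A+B)x$ from $\dom(A)$ to all of $E$. This is where the $\AL$-structure is indispensable: over a general ordered Banach space resolvent positivity of $A+B$ need not imply generation, whereas here order continuity of the norm lets the pointwise monotone limits be carried through all the \emph{a priori} estimates, and the Hille--Yosida bounds for $A+B$ are then recovered \emph{a posteriori} from the constructed semigroup.
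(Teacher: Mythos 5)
Your resolvent-series computation is sound: the expansion $\res(\lambda,A+B)=\sum_{k=0}^{N}(\res(\lambda,A)B)^{k}\res(\lambda,A)+(\res(\lambda,A)B)^{N+1}\res(\lambda,A+B)$ with positive remainder, the monotone convergence of the partial sums via order continuity of the $\AL$-norm, and the vanishing of the remainder are all correct and do use resolvent positivity of $A+B$ in an essential way. But the proof does not close, and the gap is exactly the one you flag yourself at the end: nothing in the argument produces a locally-uniform-in-$t$ bound $\sup_{t\le t_0}\|\sum_{k=0}^{N}S_k(t)\|\le C(t_0)$ independent of $N$, nor strong continuity of the limit, nor even well-definedness of the iterates (for $x\in\dom(A)$ one does not in general have $S_1(t)x=\int_0^t T(t-s)BT(s)x\,\dd s\in\dom(A)$, so $BS_1(s)x$ need not make sense). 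The Laplace-transform domination $\int_0^\infty \ee^{-\lambda t}\phi(\sum_{k\le N}S_k(t)x)\,\dd t\le\phi(\res(\lambda,A+B)x)$ controls the sum only in an averaged sense in $t$; a monotone limit of continuous functions with uniformly bounded Laplace transforms can still fail to be finite, let alone continuous, at every $t$. In the Kato--Arlotti--Banasiak theory, where this Dyson--Phillips route is carried out, the pointwise bounds come from an \emph{additional} dissipativity hypothesis $\phi(Ax)+\phi(Bx)\le 0$ on $\dom(A)_+$, which yields $\phi(\sum_{k\le N}S_k(t)x)\le\phi(T(t)x)$ by induction --- and even then one only obtains generation by some \emph{extension} of $A+B$. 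Voigt's hypothesis of resolvent positivity of $A+B$ does not supply such an estimate directly, so the step ``once locally uniform control in $t$ is obtained'' is not a technicality but the heart of the matter. (A smaller issue: rescaling $A\mapsto A-\nu$ changes the type $\omega$ but not the constant $M$, so you cannot simply ``assume $\sg{T}$ is a contraction semigroup'' without first renorming with an equivalent $\AL$-norm.)

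The proof this paper follows (parallel to Voigt's original, and to the bi-continuous version in Lemma \ref{lem:Aux} and the proof of Theorem \ref{thm:BiMVPos}) supplies precisely the missing quantitative control by a different device: the $\AL$-additivity of the norm converts the Miyadera--Voigt integral condition into a resolvent norm, $\int_0^t\|\ee^{-\lambda s}BT(s)x\|\,\dd s=\|B\int_0^t \ee^{-\lambda s}T(s)x\,\dd s\|\le\|B\res(\lambda,A)x\|$ for $x\ge 0$; then the uniform domination $\res(\lambda,A+sB)\le\res(\lambda,A+B)$ for $s\in[0,1]$ (this is where resolvent positivity of $A+B$ enters) gives a single $n$ with $\|\tfrac1n B\res(\lambda,A+\tfrac{j}{n}B)\|<\tfrac12$ for all $j$, and one applies the Miyadera--Voigt perturbation theorem $n$ times with the small perturbation $\tfrac1n B$, obtaining at each stage a genuine $C_0$-semigroup with all the uniform bounds built in. If you want to salvage your approach, you would need to either import such a smallness/dissipativity estimate to control the iterates pointwise in $t$, or abandon the direct summation and adopt the splitting-into-$n$-pieces argument.
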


Quite a number of other positive perturbation results for strongly continuous semigroups and their applications are handled by Arlotti and Banasiak \cite{PertPosAppl2006}. In this paper we consider positive perturbations of bi-continuous semigroups in the style of Voigt's work. In particular, we use the Miyadera--Voigt perturbation theorem for bi-continuous semigroups proved by Farkas \cite{FaPhD}.

\medskip\noindent The paper is organized as follows: in Section 1 we recall the Miyadera--Voigt perturbation theorem for bi-continuous semigroups and state Theorem \ref{thm:BiMVPos} as our main result, whose proof is contained in Section 2. In the last section we discuss rank-one perturbations and bi-continuous semigroups on the space $\M(\Omega)$ of bounded Borel measures in connection with differentiable measures. We remark that this paper is based on the authors's PhD thesis \cite[Chapter 5]{BuddePhD}. 

\section{Preliminaries}

\subsection{Bi-Continuous Semigroups}
The class of bi-continuous semigroups was introduced by K\"uhnemund in \cite{Ku}. Given a Banach space $X$, The idea is to add a locally convex topology $\tau$ on satisfying the following technical assumptions.

\begin{assumption}\label{asp:bicontspace} 
Consider a triple $(X,\|\cdot\|,\tau)$, where $X$ is a Banach space, and
\begin{num}
\item $\tau$ is a locally convex Hausdorff topology coarser than the norm-topology on $X$, i.e. the identity map $(X,\|\cdot\|)\to(X,\tau)$ is continuous;
\item $\tau$ is sequentially complete on the $\left\|\cdot\right\|$-closed unit ball;
\item The dual space of $(X,\tau)$ is norming for $X$, i.e.,
\begin{equation}\label{eq:norm}
\|x\|=\sup_{\substack{\varphi\in(X,\tau)'\\\|\varphi\|\leq1}}{|\varphi(x)|},\quad x\in X.\end{equation}
\end{num}
\end{assumption}

Notice that every locally convex topology $\tau$ yields a family of continuous seminorms $\semis$ and vice versa, cf. \cite[Chapter II, Sect.~4]{Schaefer1971} or \cite[Thm.~1.36 \& 1.37]{Rudin}. In particular, if we want to make explicit calculations within the framework of locally convex topologies, we use the corresponding seminorms.

\begin{remark}\label{rem:SumSeminorm}
Since a locally convex topology, generated by a family of seminorms $\semis$, does not change if we add another continuous seminorm to $\semis$, we can assume that $\semis$ also contains all finite positive linear combinations of their seminorms, see also \cite[Prop.~7.1.4]{Conway}. More observations on Assumptions \ref{asp:bicontspace} and the relations to Saks spaces can be found in \cite[Rem.~5.2]{BF}.
\end{remark}

Now we give the definition of a bi-continuous semigroup.

\begin{definition}[K\"uhnemund \cite{Ku}]\label{def:bicontsemi}
Let $X$ be a Banach space with norm $\|\cdot\|$ together with a locally convex topology $\tau$ such that the conditions in Assumption \ref{asp:bicontspace} are satisfied. We call $(T(t))_{t\geq0}$ a \emph{bi-continuous semigroup} if
\begin{num}
\item $ T(t+s)=T(t)T(s)$ and $T(0)=I$ for all $s,t\geq 0$,
\item $(T(t))_{t\geq0}$ is strongly $\tau$-continuous, i.e. the map $\varphi_x:[0,\infty)\to(X,\tau)$ defined by $\varphi_x(t)=T(t)x$ is continuous for every $x\in X$,
\item $(T(t))_{t\geq0}$ has type $(M,\omega)$ for some $M\geq 1$ and $\omega\in \RR$, i.e., $\left\|T(t)\right\|\leq M\ee^{\omega t}$ for all $t\geq0$,
\item $(T(t))_{t\geq0}$ is locally-bi-equicontinuous, i.e., if $(x_n)_{n\in\NN}$ is a norm-bounded sequence in $X$ which is $\tau$-convergent to $0$, then also $(T(s)x_n)_{n\in\NN}$ is $\tau$-convergent to $0$ uniformly for $s\in[0,t_0]$ for each fixed $t_0\geq0$.
\end{num}
\end{definition}

Significant examples in this context are evolution semigroups on $\BC(\RR,X)$ \cite{Sci2016}, Koopman semigroups \cite{Kuehner2019},\cite[Chapter 4]{EFHN2015}, semigroups induced by flows \cite[Sect.~3.2]{KuPHD}, adjoint semigroups \cite[Sect.~3.5]{KuPHD} and the Ornstein--Uhlenbeck semigroup on $\BC(\RR^d)$ \cite[Sect.~2.3]{FaPhD},\cite{FaSF},\cite{FL2009}, to mention a few.

\medskip
As in the case of $C_0$-semigroups, we can define the generator of a bi-continuous semigroup.

\begin{definition}Let $(T(t))_{t\geq0}$ be a bi-continuous semigroup on $X$. The \emph{generator} $A$ is defined by
\[Ax:=\tlim_{t\to0}{\frac{T(t)x-x}{t}}\] with the domain
\[\dom(A):=\Bigl\{x\in X:\ \tlim_{t\to0}{\frac{T(t)x-x}{t}}\ \text{exists and} \ \sup_{t\in(0,1]}{\frac{\|T(t)x-x\|}{t}}<\infty\Bigr\}.\]
\end{definition}

This generator has a number of important properties which are summarized in the following theorem (see \cite{Ku}, \cite{FaStud}):

\begin{theorem}\label{thm:BiProp}
Let $(T(t))_{t\geq0}$ be a bi-continuous semigroup with generator $A$. Then the following hold:
\begin{abc}
\item The operator $A$ is bi-closed, i.e., whenever $x_n\stackrel{\tau}{\to}x$ and $Ax_n\stackrel{\tau}{\to}y$ and both sequences are norm-bounded, then $x\in\dom(A)$ and $Ax=y$.
\item The domain $\dom(A)$ is bi-dense in $X$, i.e., for each $x\in X$ there exists a norm-bounded sequence $(x_n)_{n\in\NN}$ in $\dom(A)$ such that $x_n\stackrel{\tau}{\to}x$.
\item For $x\in\dom(A)$ we have $T(t)x\in\dom(A)$ and $T(t)Ax=AT(t)x$ for all $t\geq0$.
\item For $t>0$ and $x\in X$ one has \begin{align}\int_0^t{T(s)x\ \dd s}\in\dom(A)\ \ \text{and}\ \ A\int_0^t{T(s)x\ \dd s}=T(t)x-x. \end{align}
\item For $\lambda>\omega$ one has $\lambda\in\rho(A)$ (thus $A$ is closed) and: \begin{align}\label{eq:bicontlaplace}
\res(\lambda,A)x=\int_0^{\infty}{\ee^{-\lambda s}T(s)x\ \dd s},\quad x\in X\end{align} where the integral is a $\tau$-improper integral.
\end{abc}
\end{theorem}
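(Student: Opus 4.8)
The plan is to mirror the classical Hille--Yosida theory for $C_0$-semigroups, the only novelty being that every limit has to be taken in $\tau$ and justified by a companion norm bound coming from the type $(M,\omega)$ and the local-bi-equicontinuity in Definition~\ref{def:bicontsemi}. First I would set up a short calculus of $\tau$-Riemann integrals: if $f\colon[0,t]\to X$ is $\tau$-continuous with norm-bounded range, then its Riemann sums lie in a fixed norm-bounded set and form a $\tau$-Cauchy net, hence converge, by the sequential completeness of $\tau$ on bounded sets in Assumption~\ref{asp:bicontspace}, to an element $\int_0^t f(s)\,\dd s$. For every continuous seminorm $p$ one has $p\big(\int_0^t f(s)\,\dd s\big)\le\int_0^t p(f(s))\,\dd s$ by sublinearity and $\tau$-continuity of $p$, and $\big\|\int_0^t f(s)\,\dd s\big\|\le\int_0^t\|f(s)\|\,\dd s$ follows from the norming property~\eqref{eq:norm} once one notes that every $\varphi\in(X,\tau)'$ commutes with the integral; likewise any bounded operator that is $\tau$-continuous on norm-bounded sets --- in particular each $T(s)$ --- commutes with $\int_0^t\,\dd s$. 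By strong $\tau$-continuity and the type estimate, $s\mapsto T(s)x$ is an admissible integrand on every compact interval.

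For (d) and (b): given $x\in X$, set $y_t:=\int_0^t T(s)x\,\dd s$; shifting the integration variable yields $\tfrac1h(T(h)y_t-y_t)=\tfrac1h\int_t^{t+h}T(s)x\,\dd s-\tfrac1h\int_0^h T(s)x\,\dd s$, which $\tau$-converges to $T(t)x-x$ as $h\downarrow0$ by strong $\tau$-continuity, with a norm bound on the difference quotient independent of $h\in(0,1]$; hence $y_t\in\dom(A)$ and $Ay_t=T(t)x-x$. Applying this to the means $x_n:=n\,y_{1/n}\in\dom(A)$ proves (b), since $\|x_n\|\le M\ee^{|\omega|}\|x\|$ and $p(x_n-x)\le n\int_0^{1/n}p(T(s)x-x)\,\dd s\to0$ for every continuous seminorm $p$. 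Next, for $x\in\dom(A)$ the orbit $s\mapsto T(s)x$ is $\tau$-differentiable with derivative $T(s)Ax$: the right difference quotient equals $T(s)\tfrac{T(h)x-x}{h}$ and $\tau$-converges to $T(s)Ax$ because $T(s)$ is $\tau$-continuous on norm-bounded sets, while the left one, $T(s-h)\tfrac{T(h)x-x}{h}$, converges to the same limit after splitting off $T(s-h)Ax$ (strong $\tau$-continuity) and dominating the remainder via local-bi-equicontinuity. The right difference quotient at time $t$ also gives $T(t)x\in\dom(A)$ and $AT(t)x=T(t)Ax$, i.e.\ (c), and integrating the derivative (the fundamental theorem of calculus for $\tau$-valued functions, obtained by testing against $(X,\tau)'$) yields $T(t)x-x=\int_0^t T(s)Ax\,\dd s$ for $x\in\dom(A)$.

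Part (a) is then immediate: if $x_n\stackrel{\tau}{\to}x$ and $Ax_n\stackrel{\tau}{\to}y$ with both sequences norm-bounded, then from $T(t)x_n-x_n=\int_0^t T(s)Ax_n\,\dd s$ the left-hand side $\tau$-converges to $T(t)x-x$, and by local-bi-equicontinuity the $\tau$-limit passes through the integral on the right, so $T(t)x-x=\int_0^t T(s)y\,\dd s$; dividing by $t$ and letting $t\downarrow0$ gives $x\in\dom(A)$ with $Ax=y$. Finally, for $\lambda>\omega$ the partial integrals $\int_0^n\ee^{-\lambda s}T(s)x\,\dd s$ are even norm-Cauchy, since $\big\|\int_a^b\ee^{-\lambda s}T(s)x\,\dd s\big\|\le M\|x\|\int_a^b\ee^{(\omega-\lambda)s}\,\dd s\to0$, so the improper $\tau$-integral $R_\lambda x:=\int_0^\infty\ee^{-\lambda s}T(s)x\,\dd s$ exists and $\|R_\lambda x\|\le\tfrac{M}{\lambda-\omega}\|x\|$. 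The variable shift used in (d) gives $T(h)R_\lambda x=\ee^{\lambda h}\big(R_\lambda x-\int_0^h\ee^{-\lambda s}T(s)x\,\dd s\big)$, whence $\tfrac1h(T(h)R_\lambda x-R_\lambda x)\stackrel{\tau}{\to}\lambda R_\lambda x-x$ with norm-bounded difference quotients, so $R_\lambda x\in\dom(A)$ and $(\lambda-A)R_\lambda x=x$. On $\dom(A)$, either an integration by parts based on $T(t)x-x=\int_0^t T(s)Ax\,\dd s$ or moving $A$ through the integral via bi-closedness (a) and (c) gives $R_\lambda Ax=\lambda R_\lambda x-x$, hence $R_\lambda(\lambda-A)x=x$; therefore $\lambda-A$ is a bijection onto $X$ with bounded inverse $R_\lambda$, i.e.\ $\lambda\in\rho(A)$, $A$ is closed, and~\eqref{eq:bicontlaplace} holds.

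The recurrent difficulty, and the step deserving the most care, is the interchange of $\tau$-limits with $\tau$-Riemann and improper integrals and with the operators $T(s)$: since $T(s)$ is only $\tau$-continuous on norm-bounded sets and $\tau$ is only sequentially complete on bounded sets, each such interchange must be paired with an explicit norm bound coming from the type $(M,\omega)$, and local-bi-equicontinuity is exactly what upgrades pointwise $\tau$-convergence of integrands to convergence that survives integration. Everything else runs parallel to the $C_0$-case.
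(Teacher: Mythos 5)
Your argument is correct and is essentially the standard one: the paper itself gives no proof of Theorem~\ref{thm:BiProp}, deferring to \cite{Ku} and \cite{FaStud}, and those sources prove it exactly as you do --- via $\tau$-Riemann integrals on norm-bounded orbits, with local-bi-equicontinuity and the type $(M,\omega)$ bound supplying the limit interchanges. No substantive differences to report.
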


According to Theorem \ref{thm:BiProp}$\mathrm{(b)}$ we make the following additional definition, coming from \cite{FaPhD}, which will be of importance later.

\begin{definition}
For $\eta>1$, we call a subset $D\subseteq X$ \emph{$\eta$-bi-dense} in $X$ for $\tau$ if for all $x\in X$ there exists a sequence $(x_n)_{n\in\NN}$ in $D$ which converges to $x$ with respect to $\tau$ and furthermore,
\begin{align*}\tag{$\eta$D}
\left\|x_n\right\|\leq\eta\left\|x\right\|,\quad\text{for}\ n\in\NN.
\end{align*}
\end{definition}

From \cite[Prop.~1.7]{FaStud} we recall that if $(A,\dom(A))$ is a generator of a bi-continuous semigroup $(T(t))_{t\geq0}$, then $A$ is automatically $\eta$-bi-dense for some $\eta>1$. We also need the following notion. 

\begin{definition}
A bounded operator $B\in\LLL(X)$ is called \emph{local} if for each $\varepsilon>0$ and each $p\in\semis$ there exist a constant $K>0$ and $q\in\semis$ such that
\[
p(Bx)\leq Kq(x)+\varepsilon\left\|x\right\|,\quad x\in X.
\]
\end{definition}

\begin{definition}
A bounded function $F:\left[0,t_0\right]\to\LLL(X)$ is called \emph{local} if for all $p\in\semis$ and $\varepsilon>0$ there exists $K>0$ and $q\in\semis$ such that for all $t\in\left[0,t_0\right]$ and $x\in X$
\[
p(F(t)x)\leq Kq(x)+\varepsilon\left\|x\right\|.
\]
\end{definition}

\begin{remark}\label{rem:localtight}
\begin{abc}
	\item It was shown in \cite{FaPhD},\cite{FaSF},\cite{FaStud} that for $(A,\dom(A))$ the generator of a bi-continuous semigroup and $b\geq a\geq\omega$ the resolvent family $\left\{\res(\lambda,A):\ \lambda\in\left[a,b\right]\right\}$ is local.
	\item Every bi-continuous semigroup $(T(t))_{t\geq0}$ on $\BC(\Omega)$, for $\Omega$ a Polish space, the set\newline$\left\{T(t):\ t\in\left[0,t_0\right]\right\}$ is local for each $t_0>0$.
	\item  Let $(T(t))_{t\geq0}$ be a bi-continuous semigroup on $\BC(\Omega)$. Moreover, if $\mathscr{K}\subseteq\M(\Omega)$ is a norm-bounded and weak$^*$-compact subset of the space of bounded Borel measures, then the set $\left\{T'(t)\nu:\ \nu\in\mathscr{K}\right\}$ is tight/local. As example one can take $\mathscr{K}=\M_1(\Omega)$ if $\Omega$ is compact, cf. \cite{Farkas2011}.
\end{abc}	
\end{remark}

\subsection{Adjoints of bi-continuous semigroups}\label{subsec:AdBiCont}

We recall some essential results on adjoint of bi-continuous semigroups from \cite{Farkas2011}, since we need these later on in Section \ref{subsec:GWMeasure}. Let $X$ be a Banach space and let $\tau$ be a locally convex topology satisfying Assumption \ref{asp:bicontspace}. Furthermore let $(T(t))_{t\geq0}$ be a bi-continuous semigroup on $X$ with respect to $\tau$. As in the case of adjoints of strongly continuous semigroups, where one consideres the sun dual $X^{\odot}$ in order to have again a strongly continuous semigroup, cf. \cite[Sect.~1.3]{van1992adjoint} or \cite[Chapter II, Sect.~2.6]{EN}, we now examine the subspace $X^{\circ}$ of the dual space $X'$ consisting of all norm-bounded linear functionals which are $\tau$-sequentially continuous on norm-bounded sets of $X$. As a matter of fact $X^{\circ}$ is a closed linear subspace of $X'$ and hence a Banach space if equipped with the inherited norm of $X'$. Furthermore, $X^{\circ}$ can be equipped with the topology $\tau^{\circ}:=\sigma(X^{\circ},X)$. In order to show that $\tau^{\circ}$ satisfies Assumption \ref{asp:bicontspace}, we have to postulate that $X^{\circ}\cap\overline{B(0,1)}$ is sequentially complete with respect to $\sigma(X^{\circ},X)$. We remark that this assumption in general does not follow from the general Assumptions \ref{asp:bicontspace}.

\begin{proposition}{\cite[Prop.~2.3]{Farkas2011}}
Let $B\in\LLL(X)$ be a norm-bounded linear operator which is $\tau$-sequentially continuous on norm-bounded sets. Then the adjoint $B'\in\LLL(X')$ leaves $X^{\circ}$ invariant.
\end{proposition}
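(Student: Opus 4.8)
The plan is to verify directly that $B'\varphi\in X^{\circ}$ for every $\varphi\in X^{\circ}$, using the defining property of $X^{\circ}$ together with the hypothesis on $B$. Recall that, by definition, $X^{\circ}$ consists precisely of those $\psi\in X'$ that are $\tau$-sequentially continuous on norm-bounded subsets of $X$, and that the adjoint acts by $(B'\varphi)(x)=\varphi(Bx)$ for $x\in X$. First I would observe that $B'\varphi\in X'$ with $\left\|B'\varphi\right\|\leq\left\|B\right\|\,\left\|\varphi\right\|$, which is the standard estimate for adjoints of bounded operators; so there is nothing to check concerning norm-boundedness, and the whole content of the statement is the $\tau$-sequential continuity of $B'\varphi$ on norm-bounded sets.

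For that, let $(x_n)_{n\in\NN}$ be a norm-bounded sequence in $X$ with $x_n\stackrel{\tau}{\to}x$ for some $x\in X$; I must show $(B'\varphi)(x_n)\to(B'\varphi)(x)$. Since $B$ is norm-bounded, the sequence $(Bx_n)_{n\in\NN}$ is again norm-bounded, because $\left\|Bx_n\right\|\leq\left\|B\right\|\sup_{n\in\NN}\left\|x_n\right\|<\infty$. As $B$ is $\tau$-sequentially continuous on norm-bounded sets and $(x_n)_{n\in\NN}$ is such a sequence converging to $x$ in $\tau$, we obtain $Bx_n\stackrel{\tau}{\to}Bx$. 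Now $(Bx_n)_{n\in\NN}$ is itself a norm-bounded sequence $\tau$-converging to $Bx$, and $\varphi\in X^{\circ}$ is $\tau$-sequentially continuous on norm-bounded sets, hence $\varphi(Bx_n)\to\varphi(Bx)$, that is, $(B'\varphi)(x_n)\to(B'\varphi)(x)$. This shows $B'\varphi$ is $\tau$-sequentially continuous on norm-bounded subsets of $X$, so $B'\varphi\in X^{\circ}$, which is the claim.

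I do not expect a genuine obstacle: the argument is a straightforward composition of two sequential-continuity properties. The one point that has to be handled carefully is that sequential continuity is only assumed — and only needed — on norm-bounded sets, so one must make sure the intermediate sequence $(Bx_n)_{n\in\NN}$ remains norm-bounded before invoking the $\tau$-continuity of $\varphi$; this is precisely where the boundedness of $B$ as an operator is used, rather than merely some weaker continuity of $B$.
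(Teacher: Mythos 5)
Your proof is correct and is the standard argument: the paper itself omits the proof, simply citing \cite[Prop.~2.3]{Farkas2011}, and the composition of the two sequential continuities (using norm-boundedness of $B$ to keep the intermediate sequence $(Bx_n)_{n\in\NN}$ norm-bounded) is exactly the intended reasoning. You correctly identify the one delicate point, namely that $\tau$-sequential continuity of $\varphi$ is only available on norm-bounded sets, so nothing is missing.
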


From the previous result we conclude that we can restrict $(T'(t))_{t\geq0}$ to the space $X^{\circ}$. We denote this restricted semigroup by $(T^{\circ}(t))_{t\geq0}$. To conclude that $(T^{\circ}(t))_{t\geq0}$ is bi-continuous on $X^{\circ}$ with respect to $\tau^{\circ}$ we again have to impose a additional hypothesis. Especially we have to assume that every norm-bounded $\tau^{\circ}$-null sequence $(\varphi_n)_{n\in\NN}$ in $X^{\circ}$ is $\tau$-equicontinuous on norm bounded sets.

\medskip
Let us continue with some examples. Actually, we consider semigroups $(T(t))_{t\geq0}$ on the Banach space $X:=\BC(\Omega)$, where $\Omega$ is a Polish space, which are bi-continuous with respect to the compact-open topology $\tau_{\mathrm{co}}$. In \cite[Sect.~3]{Farkas2011} Farkas illustrated that $\BC(\Omega)^{\circ}$ coincides with $\M(\Omega)$, the space of bounded Borel measures. The following results connect bi-continuous semigroups on $\BC(\Omega)$ with these on $\M(\Omega)$.

\begin{theorem}{\cite[Thm.~3.5]{Farkas2011}}\label{thm:DualCbM}
Let $\Omega$ be a Polish space and $(T(t))_{t\geq0}$ bi-continuous on $\BC(\RR)$ with respect to $\tau_{\mathrm{co}}$. Then the semigroup $(T^{\circ}(t))_{t\geq0}$ defined as $T^{\circ}(t):=T'(t)_{|\M(\Omega)}$, $t\geq0$, is a bi-continuous semigroup on $\M(\Omega)$ with respect to $\tau^{\circ}$.
\end{theorem}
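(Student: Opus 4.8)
The plan is to deduce the statement from the general adjoint construction of Subsection~\ref{subsec:AdBiCont}. The first step is to recall from \cite[Sect.~3]{Farkas2011} that, for $X=\BC(\Omega)$ endowed with $\tau=\tau_{\mathrm{co}}$, one has $X^{\circ}=\M(\Omega)$; consequently $\tau^{\circ}=\sigma(X^{\circ},X)=\sigma(\M(\Omega),\BC(\Omega))$ is the usual weak$^{*}$-topology on $\M(\Omega)$, and the dual of $(\M(\Omega),\tau^{\circ})$ is precisely $\BC(\Omega)$. Modulo this identification the proof splits into three verifications: (a) the triple $(\M(\Omega),\|\cdot\|_{\mathrm{TV}},\tau^{\circ})$ satisfies Assumption~\ref{asp:bicontspace}; (b) the unit ball $\M(\Omega)\cap\overline{B(0,1)}$ is $\sigma(\M(\Omega),\BC(\Omega))$-sequentially complete; and (c) every norm-bounded $\tau^{\circ}$-null sequence in $\M(\Omega)$ is $\tau_{\mathrm{co}}$-equicontinuous on norm-bounded subsets of $\BC(\Omega)$. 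Once (a)--(c) are in place, \cite[Prop.~2.3]{Farkas2011} applied to each $T(t)$ --- which is norm-bounded and, by local-bi-equicontinuity, $\tau_{\mathrm{co}}$-sequentially continuous on norm-bounded sets --- shows that $T'(t)$ maps $\M(\Omega)$ into itself, so that $T^{\circ}(t)=T'(t)_{|\M(\Omega)}$ is a well-defined semigroup on $\M(\Omega)$; it then remains to check the defining properties of a bi-continuous semigroup for it.

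For (a) I would note that coarseness of $\tau^{\circ}$ relative to the norm and the Hausdorff property hold for any weak$^{*}$-topology, while the norming condition~\eqref{eq:norm}, namely $\|\mu\|_{\mathrm{TV}}=\sup\{|\int_{\Omega}f\,\dd\mu|:f\in\BC(\Omega),\ \|f\|\le1\}$, is the classical statement that the total variation of a bounded Borel measure on a metric space is attained against bounded continuous functions; I would prove it by approximating a set realising (almost) the Hahn decomposition of $\mu$ by a continuous function via Lusin's theorem, using the inner regularity of Borel measures on the Polish space $\Omega$.

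Verifications (b) and (c) are where the hypothesis that $\Omega$ is Polish is genuinely used, through tightness of Borel measures (Ulam) and a Prokhorov-type compactness argument. For (c), given a norm-bounded $\tau^{\circ}$-null sequence $(\mu_{n})_{n}$ and a norm-bounded $\mathcal{B}\subseteq\BC(\Omega)$, I would choose, for $\varepsilon>0$, a single compact $K\subseteq\Omega$ with $\sup_{n}|\mu_{n}|(\Omega\setminus K)<\varepsilon$ and estimate $|\dprod{\mu_{n}}{f}|\le|\mu_{n}|(K)\,\sup_{x\in K}|f(x)|+\varepsilon\sup_{g\in\mathcal{B}}\|g\|$ for $f\in\mathcal{B}$; since $f\mapsto\sup_{x\in K}|f(x)|$ is a $\tau_{\mathrm{co}}$-continuous seminorm, this is exactly the required equicontinuity, and it is of the same flavour as the estimates recorded in Remark~\ref{rem:localtight}. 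For (b), a $\sigma(\M(\Omega),\BC(\Omega))$-Cauchy sequence $(\mu_{n})_{n}$ with $\|\mu_{n}\|_{\mathrm{TV}}\le1$ produces a bounded linear functional $L$ on $\BC(\Omega)$ of norm $\le1$ via $L(f):=\lim_{n}\int_{\Omega}f\,\dd\mu_{n}$, and the task is to show that $L$ is represented by a bounded Borel measure; splitting $L$ into its lattice parts and using inner regularity together with uniform tightness of $(\mu_{n})_{n}$ --- which, on a Polish space, can be extracted from weak$^{*}$-Cauchyness --- one checks $\sigma$-additivity, so $L\in\M(\Omega)$ and $\mu_{n}\to L$ for $\tau^{\circ}$. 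Both points are carried out in detail in \cite[Sect.~3]{Farkas2011}.

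Granting (a)--(c), the final step is to read off the axioms of Definition~\ref{def:bicontsemi} for $(T^{\circ}(t))_{t\ge0}$: the semigroup law and $T^{\circ}(0)=\Id$ pass through adjoints and restriction; $\|T^{\circ}(t)\|\le\|T'(t)\|=\|T(t)\|\le M\ee^{\omega t}$ gives the type $(M,\omega)$; strong $\tau^{\circ}$-continuity of $t\mapsto T^{\circ}(t)\mu$ follows from $\dprod{T^{\circ}(t)\mu}{f}=\dprod{\mu}{T(t)f}$ by combining the strong $\tau_{\mathrm{co}}$-continuity of $t\mapsto T(t)f$, the local norm-boundedness of $\{T(t)f:t\in[0,t_{0}]\}$, and the $\tau_{\mathrm{co}}$-sequential continuity of $\mu$ on norm-bounded sets; and local-bi-equicontinuity follows from (c), because for fixed $f$ the orbit $\{T(s)f:s\in[0,t_{0}]\}$ is norm-bounded and $\tau_{\mathrm{co}}$-compact (being the continuous image of $[0,t_{0}]$), so a finite-subcover argument promotes the pointwise convergence $\dprod{\mu_{n}}{T(s)f}\to0$ to convergence uniform in $s\in[0,t_{0}]$. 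I expect the real obstacle to be verification (b): one must exclude that a bounded weak$^{*}$-Cauchy sequence of measures converges to a merely finitely additive functional, and it is exactly here that ``$\Omega$ Polish'' is indispensable.
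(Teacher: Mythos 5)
The paper does not prove Theorem~\ref{thm:DualCbM} at all --- it is quoted verbatim from \cite[Thm.~3.5]{Farkas2011} --- so there is no internal proof to compare against; your reconstruction follows exactly the strategy of that reference (identify $\BC(\Omega)^{\circ}=\M(\Omega)$, verify Assumption~\ref{asp:bicontspace} together with the two extra hypotheses of Subsection~\ref{subsec:AdBiCont}, then restrict the adjoints and check Definition~\ref{def:bicontsemi}). The outline is sound; the only genuinely deep ingredients, namely the uniform tightness of norm-bounded $\sigma(\M(\Omega),\BC(\Omega))$-Cauchy (resp.\ null) sequences on a Polish space used in your steps (b) and (c), are correctly identified and legitimately deferred to \cite[Sect.~3]{Farkas2011}.
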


Counter-intuitively the converse of the previous theorem also holds true.

\begin{theorem}{\cite[Thm.~3.6]{Farkas2011}}
Let $\Omega$ be a Polish space. Let $(S(t))_{t\geq0}$ be a bi-continuous semigroup on $\M(\Omega)$ with respect to $\tau^{\circ}$. Then there exists a semigroup $(T(t))_{t\geq0}$ on $\BC(\RR)$ which is bi-continuous with respect to $\tau_{\mathrm{co}}$ and such that $T^\circ(t)=S(t)$ for all $t\geq0$
\end{theorem}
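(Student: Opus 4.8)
The plan is to realize $(T(t))_{t\ge0}$ as a concrete pre-adjoint of $(S(t))_{t\ge0}$, built from the action of $S(t)$ on Dirac measures, and then to extract the required bi-continuity from a Prokhorov-type tightness argument. First I would record a structural lemma. Since $x\mapsto\delta_x$ maps $\Omega$ continuously into $(\M(\Omega),\tau^{\circ})$ with norm-bounded image, and each $S(t)$ is $\tau^{\circ}$-sequentially continuous on norm-bounded sets (a consequence of local-bi-equicontinuity of $(S(t))_{t\ge0}$ applied to the increments), the map $x\mapsto S(t)\delta_x$ is $\tau^{\circ}$-continuous; combining this with the fact that every $\mu\in\M(\Omega)$ is the $\tau^{\circ}$-limit of a sequence of finitely supported measures of norm $\le\|\mu\|$ (here $\Omega$ Polish enters), and passing to the limit in $\dprod{g}{S(t)\mu_n}=\int\dprod{g}{S(t)\delta_x}\,\dd\mu_n(x)$, one obtains
\[
\dprod{g}{S(t)\mu}=\int_{\Omega}\dprod{g}{S(t)\delta_x}\,\dd\mu(x),\qquad g\in\BC(\Omega),\ \mu\in\M(\Omega),\ t\ge0 .
\]
I then \emph{define} $(T(t)f)(x):=\dprod{f}{S(t)\delta_x}=\int_{\Omega}f\,\dd(S(t)\delta_x)$ for $f\in\BC(\Omega)$, $x\in\Omega$, $t\ge0$. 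Boundedness of $T(t)f$ and the estimate $\|T(t)\|\le M\ee^{\omega t}$ follow from $\|S(t)\delta_x\|\le M\ee^{\omega t}$; continuity of $T(t)f$ on $\Omega$ follows from $\tau^{\circ}$-continuity of $x\mapsto S(t)\delta_x$; and the identities $T(0)=\Id$ and $T(t+s)=T(t)T(s)$ drop out of the displayed representation applied with $\mu=S(t)\delta_x$. Thus $(T(t))_{t\ge0}$ is an operator semigroup on $\BC(\Omega)$ of type $(M,\omega)$.

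Next I would verify the two genuinely topological conditions. For strong $\tau_{\mathrm{co}}$-continuity, write $S(t_n)\delta_{x_n}-S(t_0)\delta_{x_0}=S(t_n)(\delta_{x_n}-\delta_{x_0})+(S(t_n)\delta_{x_0}-S(t_0)\delta_{x_0})$: the first summand is $\tau^{\circ}$-null by local-bi-equicontinuity of $(S(t))_{t\ge0}$ and the second by its strong $\tau^{\circ}$-continuity, so $h(t,x):=\dprod{f}{S(t)\delta_x}$ is jointly sequentially continuous, hence (the domain being metrizable) jointly continuous, on $[0,\infty)\times\Omega$; uniform continuity on compacta then gives that $t\mapsto T(t)f$ is continuous into $(\BC(\Omega),\tau_{\mathrm{co}})$. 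For local-bi-equicontinuity, let $(f_n)_{n\in\NN}$ be norm-bounded in $\BC(\Omega)$ with $f_n\to0$ in $\tau_{\mathrm{co}}$, fix a compact $K\subseteq\Omega$ and $t_0>0$; the same splitting shows $(t,\nu)\mapsto S(t)\nu$ is continuous on $[0,t_0]\times\{\delta_x:x\in K\}$, so its image $\{S(t)\delta_x:t\in[0,t_0],\,x\in K\}$ is norm-bounded and $\tau^{\circ}$-compact, hence uniformly tight by Prokhorov's theorem. Choosing a compact $L$ with $|S(t)\delta_x|(\Omega\setminus L)<\varepsilon$ for all such $t,x$ and splitting the integral yields
\[
\sup_{x\in K,\ t\in[0,t_0]}\bigl|(T(t)f_n)(x)\bigr|\le M\ee^{\omega t_0}\sup_{y\in L}|f_n(y)|+\varepsilon\sup_{m}\|f_m\|_\infty ,
\]
whose right-hand side tends to $\varepsilon\sup_m\|f_m\|_\infty$ as $n\to\infty$; since $\varepsilon$ is arbitrary, $T(t)f_n\to0$ in $\tau_{\mathrm{co}}$ uniformly on $[0,t_0]$. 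This in particular gives $\tau_{\mathrm{co}}$-sequential continuity of each $T(t)$ on norm-bounded sets, so by \cite[Prop.~2.3]{Farkas2011} the adjoint $T'(t)$ leaves $\M(\Omega)=\BC(\Omega)^{\circ}$ invariant, and $(T(t))_{t\ge0}$ is bi-continuous on $\BC(\Omega)$ with respect to $\tau_{\mathrm{co}}$.

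Finally, for $\mu\in\M(\Omega)\subseteq\BC(\Omega)'$ and $f\in\BC(\Omega)$ the structural lemma gives $\dprod{T'(t)\mu}{f}=\dprod{\mu}{T(t)f}=\int_{\Omega}\dprod{f}{S(t)\delta_x}\,\dd\mu(x)=\dprod{f}{S(t)\mu}$, i.e. $T^{\circ}(t)=T'(t)_{|\M(\Omega)}=S(t)$ for all $t\ge0$, which is the assertion. I expect the main obstacle to be precisely the uniform tightness of $\{S(t)\delta_x:t\in[0,t_0],\,x\in K\}$ used for local-bi-equicontinuity: this is the point where $\Omega$ being Polish is indispensable (through Prokhorov's theorem and the metrizability of norm-bounded subsets of $(\M(\Omega),\tau^{\circ})$), and it is the measure-theoretic counterpart of the tightness/localness phenomenon recorded in Remark~\ref{rem:localtight}; the remaining ingredients — the $\tau^{\circ}$-density of finitely supported measures, the representation lemma, and the algebraic bookkeeping for the semigroup law and the adjoint — are routine.
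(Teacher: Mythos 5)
The paper does not prove this statement itself but cites \cite[Thm.~3.6]{Farkas2011}, and your construction $(T(t)f)(x)=\langle f,S(t)\delta_x\rangle$, the integral representation $\langle g,S(t)\mu\rangle=\int_\Omega\langle g,S(t)\delta_x\rangle\,\dd\mu(x)$, and the tightness argument for $\{S(t)\delta_x: t\in[0,t_0],\,x\in K\}$ constitute essentially the proof given there. The one imprecision is attributing the implication ``norm-bounded and $\tau^{\circ}$-compact $\Rightarrow$ uniformly tight'' for \emph{signed} measures to Prokhorov's theorem and to metrizability of norm-bounded sets (which fails for non-compact $\Omega$); for Polish $\Omega$ this implication is really the strong Mackey property of the strict topology on $\BC(\Omega)$, i.e.\ exactly the tightness statement recorded in Remark~\ref{rem:localtight}, but the claim itself is correct and the rest of the argument stands.
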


The following result yields a characterization for the generator of adjoints of bi-continuous semigroups by means of adjoints of unbounded operators. It is strongly related to \cite[Chapter I, Sect.~2.5]{EN} and \cite[Thm.~1.2.3]{van1992adjoint}. The proof of Lemma \ref{lem:DomAdBiContSemi} is essential the same and so we omit it here.

\begin{lemma}\label{lem:DomAdBiContSemi}
Let $(T(t))_{t\geq0}$ be bi-continuous on $X$ with respect to $\tau$ and assume that the additional hypotheses on $X^{\circ}$ and $\tau^{\circ}$ from above hold. Let us denote the generator of $(T^{\circ}(t))_{t\geq0}$ by $(A^{\circ},\dom(A^{\circ}))$. Then
\[
\dom(A^{\circ})=\left\{x'\in X^{\circ}:\ \exists y'\in X^{\circ}\ \forall x\in\dom(A):\ \left\langle Ax,x'\right\rangle=\left\langle x,y'\right\rangle\right\},\quad A^{\circ}x'=y'.
\]
\end{lemma}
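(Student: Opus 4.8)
The plan is to mimic the classical proof for $C_0$-semigroups (\cite[Chapter I, Sect.~2.5]{EN}, \cite[Thm.~1.2.3]{van1992adjoint}) and establish the two inclusions separately, keeping in mind that in the bi-continuous setting the only continuity available for elements of $X^{\circ}$ is $\tau$-sequential continuity on norm-bounded sets, that $\tau^{\circ}=\sigma(X^{\circ},X)$ (so $X$ separates the points of $X^{\circ}$ and $\tau^{\circ}$-convergence is tested by pairing against fixed elements of $X$), that $\dom(A)$ is bi-dense in $X$, and that a functional from $X^{\circ}$ may be pulled inside a $\tau$- or $\tau^{\circ}$-(improper) integral provided the integrand is norm-bounded. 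Write $\langle\cdot,\cdot\rangle$ for the duality $X\times X'$, so that $\langle T(t)x,x'\rangle=\langle x,T^{\circ}(t)x'\rangle$ for $x\in X$, $x'\in X^{\circ}$, $t\geq0$, since $T^{\circ}(t)$ is by definition the restriction of $T'(t)$.

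For the inclusion ``$\subseteq$'', given $x'\in\dom(A^{\circ})$ with $y':=A^{\circ}x'$, I would divide the identity $\langle T(t)x-x,x'\rangle=\langle x,T^{\circ}(t)x'-x'\rangle$ by $t$ and let $t\to0$ for a fixed $x\in\dom(A)$. On the left, $\tfrac1t(T(t)x-x)\to Ax$ in $\tau$ along a norm-bounded family (by the definition of $\dom(A)$), so $x'$ may be applied through the limit; on the right, $\tfrac1t(T^{\circ}(t)x'-x')\to A^{\circ}x'=y'$ in $\tau^{\circ}$, and since this is weak$^{*}$-convergence one simply pairs with the fixed $x$. This yields $\langle Ax,x'\rangle=\langle x,y'\rangle$ for all $x\in\dom(A)$, which is exactly the membership condition, with the correct value of $y'$.

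For the reverse inclusion, given $x'\in X^{\circ}$ and $y'\in X^{\circ}$ with $\langle Ax,x'\rangle=\langle x,y'\rangle$ on $\dom(A)$, the key step is the integrated identity
\[
T^{\circ}(t)x'-x'=\int_0^t T^{\circ}(s)y'\,\dd s\quad\text{in }X^{\circ},\qquad t\geq0.
\]
To obtain it, fix $x\in\dom(A)$, use Theorem \ref{thm:BiProp}(c),(d) to write $T(t)x-x=\int_0^t T(s)Ax\,\dd s$ as a $\tau$-integral with norm-bounded integrand, pair with $x'$, pull $x'$ inside, replace $\langle AT(s)x,x'\rangle$ by $\langle T(s)x,y'\rangle$ via the hypothesis applied to $T(s)x\in\dom(A)$, and recognise the outcome as $\langle x,\int_0^t T^{\circ}(s)y'\,\dd s\rangle$; then extend from the bi-dense set $\dom(A)$ to all $x\in X$ and use that $X$ separates $X^{\circ}$. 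From this identity, dividing by $t$ and using $\tau^{\circ}$-strong continuity of $(T^{\circ}(s))_{s\geq0}$ at $0$ gives $\tlim_{t\to0}\tfrac1t(T^{\circ}(t)x'-x')=y'$, while $\|T^{\circ}(t)x'-x'\|\leq\int_0^t\|T^{\circ}(s)\|\,\|y'\|\,\dd s\leq M\|y'\|\int_0^t\ee^{\omega s}\,\dd s$ supplies the missing bound $\sup_{t\in(0,1]}\tfrac1t\|T^{\circ}(t)x'-x'\|<\infty$. Hence $x'\in\dom(A^{\circ})$ and $A^{\circ}x'=y'$.

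The only genuine obstacle — and the reason the author can call the argument ``essentially the same'' as in the $C_0$-case — is the bookkeeping with the two topologies: one must check that every exchange of a functional with a $\tau$- or $\tau^{\circ}$-integral, and every passage to a limit, is carried out along a norm-bounded family, since $X^{\circ}$ only guarantees $\tau$-sequential continuity on norm-bounded sets and the integrals in question are not Bochner integrals. The definitions of $\dom(A)$ and $\dom(A^{\circ})$, which build in precisely the needed norm bounds on difference quotients, together with the bi-continuity axioms (in particular local bi-equicontinuity), make all of these steps legitimate, so no idea beyond the classical one is required.
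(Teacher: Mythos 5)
Your argument is correct and is precisely the classical adjoint-semigroup proof from \cite[Chapter I, Sect.~2.5]{EN} and \cite[Thm.~1.2.3]{van1992adjoint}, transported with the right bookkeeping (norm-bounded difference quotients, $\tau$-sequential continuity of elements of $X^{\circ}$ on norm-bounded sets, weak$^{*}$ pairing for $\tau^{\circ}$-limits); this is exactly the argument the paper has in mind when it omits the proof as ``essentially the same''. No discrepancy to report.
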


\subsection{Positivity and Bi-$\AL$-spaces}

In \cite{FaStud} Farkas considered bounded and in \cite{FaSF} Miyadera--Voigt perturbation of bi-continuous semigroups, and more recently, Budde and Farkas also studied Desch--Schappacher type perturbations, see \cite{BF2}. In this paper we take positivity of Miyadera--Voigt perturbations into account as Voigt in \cite{Voigt1989} did for strongly continuous operator semigroups on Banach spaces. In \cite{ArendtRhandi1991} Arendt and Rhandi characterize positive perturbations by multiplication operators and apply this to elliptic Schr\"odinger operator. We recall the following definitions in order to set up the theory.

\begin{definition}
A \emph{vector lattice} or \emph{Riesz space} is a vector space $V$ equipped with a partial order $\leq$ such that for each $x,y,z\in V$:
\begin{abc}
	\item $x\leq y \Rightarrow x+z\leq y+z$.
	\item $x\leq y \Rightarrow \alpha x\leq\alpha y$ for all scalars $\alpha\geq0$.
	\item For any pair $x,y\in V$ there exists a supremum, denoted by $x\vee y$, and a infimum, denoted by $x\wedge y$, in $V$ with respect to the partial order $\leq$.
\end{abc}
An element $x\in V$ is called \emph{positive} if $x\geq0$. The set of all positive elements of $V$ is denoted by $V_+$. Furthermore, the \emph{absolute value} of an element $x\in V$ is defined by $\left|x\right|:=x\wedge(-x)$.
\end{definition}

\begin{definition}
A \emph{Banach lattice} is a Banach space $(X,\left\|\cdot\right\|)$ which is a Riesz space with an partial order such that for all $x,y\in X$: $\left|x\right|\leq\left|y\right| \Rightarrow \left\|x\right\|\leq\left\|y\right\|$.
\end{definition}

Let $X$ be a Banach lattice and $T\in\LLL(X)$. Then $T$ is called positive, denoted by $T\geq0$, if $Tx\geq0$ for each $x\in X_+$. A semigroup of bounded linear operators $(T(t))_{t\geq0}$ on such a Banach lattice is called positive if $T(t)\geq0$ for each $t\geq0$. In connection with unbounded operators, the following definition which was suggested by Arendt \cite{Arendt1987}.

\begin{definition}
A linear operator $(A,\dom(A))$ on a Banach lattice $X$ is called \emph{resolvent positive} if there exists $\omega\in\RR$ such that $(\omega,\infty)\subseteq\rho(A)$ and such that $R(\lambda,A)\geq0$ for each $\lambda>\omega$.
\end{definition}

In the theorem of Voigt mentioned in the introduction the concept of so-called $\AL$-spaces is significant. These spaces satisfy a special kind of norm property, cf. \cite[Chapter II, Sect.~8]{Schaefer1974}. As we have to take care of an additional locally convex topology in the setting of bi-continuous semigroups, we introduce the following related notion.

\begin{definition}\label{def:biAL}
Let $(X,\left\|\cdot\right\|,\leq)$ be a Banach lattice with ordering and locally convex topology $\tau$ generated by a family $\semis$ of seminorms which satisfies the Assumptions \ref{asp:bicontspace}. We say that $X$ is a \emph{bi-$\AL$ space} if it is a $\AL$-space, i.e., for all $x,y\in X_+$ the equality
\[
\left\|x+y\right\|=\left\|x\right\|+\left\|y\right\|
\]
holds, and there exists $\semis_+\subseteq\semis$ such that $\semis_+$ still generates the locally convex topology $\tau$ and for all $x,y\in X_+$
\[
p(x+y)=p(x)+p(y)
\]
for each $p\in\semis_+$.
\end{definition}

In Section \ref{subsec:GWMeasure} we will consider an explicit example of a space which satisfies the properties of Definition \ref{def:biAL}. Locally convex spaces satisfying such additivity conditions as in Definition \ref{def:biAL} are also mentioned in \cite{Cristescu2008} to discuss regular operators on vector lattices. 


\subsection{Positive perturbations}

The main result of this paper is the following.

\begin{theorem}\label{thm:BiMVPos}
Let $(A,\dom(A))$ be the generator of a positive, local bi-continuous semigroup $\sg{T}$ on a bi-$\AL$ space $X$ with $\eta$-bi-dense domain $\dom(A)$ for some $\eta>1$. Let $B:\dom(A)\rightarrow X$ be a positive operator, i.e., $Bx\geq0$ for each $x\in\dom(A)\cap X_+$, and assume that $B\res(\lambda,A)$ is local and $(A+B,\dom(A))$ is resolvent positive. Then $(A+B,\dom(A))$ is the generator of a positive bi-continuous semigroup.
\end{theorem}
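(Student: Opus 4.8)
The plan is to mimic Voigt's strategy from \cite{Voigt1989}, but carrying along the extra locally convex topology $\tau$ and the locality hypotheses at every stage. The first step is to construct the perturbed semigroup as an operator-valued object: since $A$ generates a positive bi-continuous semigroup and $B\res(\lambda,A)$ is local, I would invoke the Miyadera--Voigt perturbation theorem for bi-continuous semigroups of Farkas \cite{FaPhD} (recalled in Section 1) to produce a bi-continuous semigroup $\sg{S}$ generated by an extension of $A+B$; the point of the resolvent-positivity hypothesis is then to identify that extension with $(A+B,\dom(A))$ itself and, crucially, to force positivity. A preliminary reduction by rescaling ($A \rightsquigarrow A-\mu$ for $\mu$ large) lets me assume the relevant resolvents exist and are given by absolutely convergent Neumann-type/Dyson--Phillips series.

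The heart of the argument is the positivity of $\sg{S}$, and here the $\AL$-structure does the work exactly as in Voigt. For $x\in X_+$ I would show $\res(\lambda, A+B)x \geq 0$ for large $\lambda$ — this is the resolvent-positivity assumption — and then recover positivity of $S(t)$ from the exponential formula / Post--Widder-type inversion $S(t)x = \lim_{n\to\infty}\big(\tfrac{n}{t}\res(\tfrac{n}{t},A+B)\big)^n x$, where the limit is taken in $\tau$ and the iterates stay norm-bounded so that bi-closedness and local-bi-equicontinuity apply. The role of the bi-$\AL$ property is twofold: the norm-additivity $\|x+y\|=\|x\|+\|y\|$ on $X_+$ controls the norms of the Dyson--Phillips iterates (so the series converges with the right bound, giving type $(M,\omega')$), and the seminorm-additivity $p(x+y)=p(x)+p(y)$ for $p\in\semis_+$ lets me estimate $p$ of the iterates in the same additive fashion, which is what makes the $\tau$-convergence of the inversion formula work uniformly enough to stay inside the class of bi-continuous semigroups. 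The $\eta$-bi-density of $\dom(A)$ and the locality of $\{\res(\lambda,A)\}$ and of $B\res(\lambda,A)$ are what guarantee that the constructed semigroup is again \emph{local}, closing the hypotheses needed to run the argument and to identify the generator via Lemma~\ref{lem:DomAdBiContSemi}-type reasoning or directly through Theorem~\ref{thm:BiProp}(e).

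The main obstacle I anticipate is precisely the interplay between positivity and the two-topology bookkeeping: in the $C_0$-setting Voigt can freely use norm-limits and the Trotter--Kato/exponential formula, but here every limit must be taken in $\tau$ with simultaneous norm-boundedness, and the perturbation series for $\res(\lambda,A+B)$ must be shown to converge in $\tau$ (not just in norm) with the locality estimates preserved term by term. Making the additivity of $\semis_+$ interact correctly with the alternating-looking structure of the Dyson--Phillips expansion — so that one genuinely gets a \emph{positive} operator in the $\tau$-limit rather than merely a bounded one — is the delicate point; this is where the hypothesis that $X$ is bi-$\AL$ (rather than just $\AL$ plus an arbitrary compatible $\tau$) is indispensable. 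A secondary technical point is verifying that $(A+B,\dom(A))$ is already closed (equivalently, that no further bi-closure is needed), which should follow from resolvent positivity together with Theorem~\ref{thm:BiProp}(e) applied to the constructed semigroup, but requires care in identifying $\dom(A+B)$ with $\dom(A)$.
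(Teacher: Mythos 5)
Your proposal stalls at its very first step: you cannot ``invoke the Miyadera--Voigt perturbation theorem \dots\ to produce a bi-continuous semigroup $\sg{S}$'' directly from the stated hypotheses, because Theorem \ref{thm:BiMV} requires the smallness condition $\int_0^t\|BT(s)x\|\,\dd s< K\|x\|$ with $K<1/\eta$, and neither locality of $B\res(\lambda,A)$ nor resolvent positivity of $A+B$ gives any such bound for $B$ itself. The rescaling $A\rightsquigarrow A-\mu$ does not help either: taking $\lambda$ large does not force $\|B\res(\lambda,A)\|$ to be small for a Miyadera--Voigt perturbation (that failure is precisely what distinguishes this class from bounded perturbations). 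The missing idea --- which is the entire content of the paper's proof and goes back to Voigt --- is the reduction to a small-norm auxiliary lemma by cutting $B$ into $n$ pieces: \cite[Thm.~1.1]{Voigt1989} supplies $\lambda\in\rho(A+B)$ with $r(B\res(\lambda,A))<1$; resolvent monotonicity gives $\res(\lambda,A)\le\res(\lambda,A+sB)\le\res(\lambda,A+B)$ for $s\in[0,1]$, whence $\|\tfrac{1}{n}B\res(\lambda,A+\tfrac{j}{n}B)\|<\tfrac{1}{2\eta}$ for all $0\le j\le n-1$ once $n>\|2\eta B\res(\lambda,A+B)\|$; one then perturbs successively $A\to A+\tfrac{1}{n}B\to\cdots\to A+B$, each step being covered by an auxiliary result (Lemma \ref{lem:Aux}) that \emph{assumes} $\|B\res(\lambda,A)\|<\tfrac{1}{2M}$. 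Without this bootstrap your argument never produces the semigroup whose positivity you then want to discuss.

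Two further points. First, the $\AL$-structure is not used to ``control the Dyson--Phillips iterates''; it is used before any series appears, to convert an integral of norms into the norm of an integral: for $x\in\dom(A)_+$ all integrands are positive, so $\int_0^t\|\ee^{-\lambda s}BT(s)x\|\,\dd s=\bigl\|B\int_0^t\ee^{-\lambda s}T(s)x\,\dd s\bigr\|\le\|B\res(\lambda,A)x\|$, and the additivity of the seminorms in $\semis_+$ yields condition (iii) of Theorem \ref{thm:BiMV} in the same way; the extension to arbitrary $x\in\dom(A)$ goes through the decomposition $x=x_+-x_-$, the $\AL$-identity $\|x_+\|+\|x_-\|=\|x\|$, and the regularizations $x_{n,\pm}=n\int_0^{1/n}T(t)x_\pm\,\dd t\in\dom(A)$. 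Second, the Post--Widder inversion you propose for positivity is both unavailable off the shelf for bi-continuous semigroups and unnecessary: positivity of each perturbed semigroup comes from the construction itself (positive terms in the variation-of-parameters expansion for a positive perturbation of a positive semigroup), not from an exponential formula.
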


As compared with \cite[Thm.~0.1]{Voigt1989} we need some additional technical assumptions in Theorem \ref{thm:BiMVPos} due to the Miyadera--Voigt perturbation theorem for bi-continuous semigroups, which we will recall here.

\begin{theorem}{\cite[Thm.~3.2.3]{FaPhD}}\label{thm:BiMV}
Let $(T(t))_{t\geq0}$ be a bi-continuous semigroup on $X$ with respect to $\tau$ with generator $(A,\dom(A))$. Suppose that $\dom(A)$ is $\eta$-bi-dense in $X_0$ and that $B:(\dom(A),\tau_A)\rightarrow(X_0,\tau)$ is continuous on $\left\|\cdot\right\|_A$-bounded sets. Suppose that there exists $t_0>0$ and $0<K<\frac{1}{\eta}$ such that
\begin{iiv}
	\item The map $s\mapsto\left\|BT(s)x\right\|$ is bounded on $\left[0,t_0\right]$ and for each $x\in\dom(A)$.
	\item $\displaystyle{\int_0^t{\left\|BT(s)x\right\|\ \dd{s}}}<K\left\|x\right\|$ for each $t\in\left[0,t_0\right]$ and $x\in\dom(A)$.
	\item For all $\varepsilon>0$ and $p\in\semis$ there exists $q\in\semis$ and $M>0$ such that
	\[
	\int_0^{t_0}{p(BT(s)x)\ \dd{s}}<Mq(x)+\varepsilon\left\|x\right\|,
	\]
	for each $x\in\dom(A)$.
\end{iiv}
Then $(A+B,\dom(A+B))$ generates a bi-continuous semigroup $(S(t))_{t\geq0}$. Furthermore, the semigroup $(S(t))_{t\geq0}$ satisfies the variation of parameter formula
\[
T(t)x=S(t)x+\int_0^t{T(t-s)BS(s)x\ \dd{s}},\quad x\in\dom(A).
\]
\end{theorem}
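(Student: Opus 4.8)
The plan is to reproduce the classical Miyadera--Voigt construction through the Dyson--Phillips series, carrying out the $\|\cdot\|$-estimates exactly as for $C_0$-semigroups while simultaneously propagating the $\tau$-estimates supplied by hypothesis (iii); the local-bi-equicontinuity of $\sg{T}$ is what makes the $\tau$-bookkeeping close. All integrals $\int_0^t T(t-s)Bf(s)\,\dd s$ below are $\tau$-(improper) integrals of $\tau$-continuous, norm-bounded integrands, hence well defined by the sequential $\tau$-completeness of the unit ball in Assumption \ref{asp:bicontspace}(2). The first step is to extend the Miyadera map: for $x\in\dom(A)$ put $(Rx)(s):=BT(s)x$, which is meaningful since $T(s)\dom(A)\subseteq\dom(A)$, so that by (i) and (ii) one gets $R\colon\dom(A)\to\L^1([0,t_0],X)$ with $\int_0^{t_0}\|(Rx)(s)\|\,\dd s<K\|x\|$. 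Since $\dom(A)$ is $\eta$-bi-dense, each $x\in X$ has a norm-bounded $\tau$-approximating sequence $(x_n)$ with $\|x_n\|\le\eta\|x\|$; using (iii) together with local-bi-equicontinuity to pass to the limit, I would extend $R$ to all of $X$, the $\eta$-inflation of the approximating norms producing $\int_0^{t_0}\|(Rx)(s)\|\,\dd s\le\eta K\|x\|$ with $\eta K<1$. This is precisely the role of $K<\tfrac1\eta$. At the same time (iii) yields, for every $p\in\semis$, a $q\in\semis$ and $M,\varepsilon$ with $\int_0^{t_0}p\bigl((Rx)(s)\bigr)\,\dd s\le Mq(x)+\varepsilon\|x\|$, which survives the extension and records the $\tau$-regularity of $s\mapsto(Rx)(s)$.

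\textbf{Series and semigroup law.} Set $S_0(t):=T(t)$ and, recursively, $S_{n+1}(t)x:=\int_0^t S_n(t-s)\,BT(s)x\,\dd s$. Here $B$ is only ever applied to $T(s)x$, so no iterate need lie in $\dom(A)$ and every term is well defined. Expanding the iterates into simplex integrals of the kernel $T(t-\sigma_n)BT(\sigma_n-\sigma_{n-1})B\cdots BT(\sigma_1)$ and estimating via the extended Miyadera bound, one obtains geometric control $\sup_{t\le t_0}\|S_n(t)\|\le C(\eta K)^n$, so $S(t):=\sum_{n\ge0}S_n(t)$ converges in $\LLL(X)$ uniformly on $[0,t_0]$, with $\|S(t)\|\le M'\ee^{\omega't}$ after extension to $t\ge0$ by concatenation. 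The semigroup identity $S(t+s)=S(t)S(s)$ then follows from uniqueness for the Volterra equation $S=T+\int_0^{\cdot}S(\cdot-r)BT(r)\,\dd r$, equivalently by rearranging the double series.

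\textbf{Bi-continuity.} Strong $\tau$-continuity of $t\mapsto S(t)x$ and local-bi-equicontinuity of $\sg{S}$ are obtained by transferring the corresponding features of $\sg{T}$ through the series: each $S_n(\cdot)x$ is $\tau$-continuous by local-bi-equicontinuity of $\sg{T}$ and the $\tau$-integral structure, while the $\tau$-estimates from the first step endow the $n$-th term with a uniform seminorm bound of the same geometric type, so the series converges compatibly with $\tau$. For local-bi-equicontinuity I would take a norm-bounded $\tau$-null sequence $(y_k)$ and bound $p(S(s)y_k)$ uniformly in $s\in[0,t_0]$ by applying the local-bi-equicontinuity of $\sg{T}$ termwise and summing the geometric seminorm estimates, concluding $p(S(s)y_k)\to0$ uniformly. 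Together with the norm type this shows $\sg{S}$ is bi-continuous.

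\textbf{Generator and variation of parameters.} Taking $\tau$-Laplace transforms of the series and using the convolution theorem gives $\int_0^\infty\ee^{-\lambda t}S(t)x\,\dd t=\sum_{n\ge0}\res(\lambda,A)\bigl(B\res(\lambda,A)\bigr)^n x=\res(\lambda,A)\bigl(\Id-B\res(\lambda,A)\bigr)^{-1}x$ for large $\lambda$, the Neumann series converging because $\sum_n S_n$ does. A direct computation then shows $\bigl(\lambda-(A+B)\bigr)\res(\lambda,A)(\Id-B\res(\lambda,A))^{-1}=\Id$ on $X$ and the reverse composition on $\dom(A)$, so this operator is $\res(\lambda,A+B)$ and the generator of $\sg{S}$ is $(A+B,\dom(A))$. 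Finally, summing the defining series and interchanging summation with the integral (justified by the earlier estimates) yields the variation-of-parameters identity relating $\sg{T}$ and $\sg{S}$ in the displayed form. The main obstacle is the first step and its propagation in the third: the honest difficulty is not the norm estimate (the standard Miyadera bound, with $\eta$ entering through $\eta$-bi-density) but the simultaneous construction of a $\tau$-continuous, locally bi-equicontinuous extension of $BT(\cdot)$ and the verification that the Dyson--Phillips series preserves these two topological features; condition (iii) and the local-bi-equicontinuity of $\sg{T}$ are precisely calibrated for this.
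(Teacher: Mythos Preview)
The paper does not contain a proof of this theorem: it is quoted verbatim as \cite[Thm.~3.2.3]{FaPhD} and used as a black box in the proof of Lemma~\ref{lem:Aux} and Theorem~\ref{thm:BiMVPos}. There is therefore nothing in the present paper to compare your argument against.

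That said, your sketch follows precisely the strategy of the original reference: construct the abstract Volterra operator $x\mapsto BT(\cdot)x$ on $\dom(A)$, extend it to $X$ via $\eta$-bi-density (whence the constraint $K<1/\eta$), build the Dyson--Phillips series $S=\sum_n S_n$, and identify the generator through the Laplace transform and a Neumann series for the resolvent. The only point where your outline is thin is exactly the one you flag yourself at the end: the propagation of the seminorm estimate (iii) through the iterated convolutions, needed both for strong $\tau$-continuity of $t\mapsto S(t)x$ and for local bi-equicontinuity of $\sg{S}$, requires a careful induction showing that each $S_n$ inherits a ``local'' bound of the form $p(S_n(t)x)\le C_n q(x)+\varepsilon_n\|x\|$ with summable constants. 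In Farkas's proof this is done by combining (iii) with the local bi-equicontinuity of $\sg{T}$ at every step of the recursion; your phrase ``summing the geometric seminorm estimates'' is correct in spirit but hides the actual work. One minor remark: your recursion $S_{n+1}(t)x=\int_0^t S_n(t-s)BT(s)x\,\dd s$ yields directly the identity $S(t)x=T(t)x+\int_0^t S(t-s)BT(s)x\,\dd s$, which is the companion form of the displayed variation-of-parameters formula; deducing the stated version with $T(t-s)BS(s)$ requires one more manipulation (or the alternative recursion $S_{n+1}(t)x=\int_0^t T(t-s)BS_n(s)x\,\dd s$).
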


\begin{remark}\label{rem:Equiv}
\begin{abc}
	\item If $(T(t))_{t\geq0}$ is a bi-continuous semigroup generated by $(A,\dom(A))$, then one can deduce from \cite[Lemma~4.15]{PertPosAppl2006} the following equivalence for $\lambda\in\RR$:
\begin{align*}
&\exists M\in\left(0,1\right)\ \forall t\in\left[0,t_0\right]\ \forall x\in\dom(A): \int_0^{t}{\left\|BT(s)x\right\|\ \dd{s}}\leq M\left\|x\right\|\\
\Longleftrightarrow\ \ & \exists M'\in\left(0,1\right)\ \forall t\in\left[0,t_0\right]\ \forall x\in\dom(A): \int_0^{t}{\left\|\ee^{-\lambda s}BT(s)x\right\|\ \dd{s}}\leq M'\left\|x\right\|
\end{align*}

	\item One also easily proves the following equivalence:
\begin{align*}
&\forall\varepsilon>0\ \forall p\in\semis\ \exists K\geq0\ \exists q\in\semis\ \forall x\in\dom(A):\ \int_0^{t_0}{p(BT(s)x)\ \dd{s}}\leq Kq(x)+\varepsilon\left\|x\right\|\\
\Longleftrightarrow\ \ & \forall\varepsilon>0\ \forall p\in\semis\ \exists K'\geq0\ \exists q'\in\semis\ \forall x\in\dom(A):\ \int_0^{t_0}{p(\ee^{-\lambda s}BT(s)x)\ \dd{s}}\leq K'q'(x)+\varepsilon\left\|x\right\|
\end{align*}
\end{abc}
\end{remark}

\section{The Proof}


Recall from \cite[Chapter II, Def.~1.12]{EN} that the spectral bound $\mathrm{s}(A)$ of an linear operator is defined by
\[
\mathrm{s}(A):=\sup\left\{\mathrm{Re}(\lambda):\ \lambda\in\sigma(A)\right\},
\]
where $\sigma(A)$ denotes the spectrum of the operator $(A,\dom(A))$. In order to proof Theorem \ref{thm:BiMVPos} we need the following lemma. 

\begin{lemma}\label{lem:Aux}
Let $\eta>1$ and let $(A,\dom(A))$ be the $\eta$-bi-densely defined generator of a positive local bi-continuous semigroup $\sg{T}$ on a bi-$\AL$ space $X$. Let $M:=\sup_{t\in\left[0,1\right]}{\left\|T(t)\right\|}<\infty$ and let $B:\dom(A)\rightarrow X$ be a positive operator such that there exists $\lambda>\mathrm{s}(A)$ such that the operator $B\res(\lambda,A)$ is local and $\left\|B\res(\lambda,A)\right\|<\frac{1}{2M}<1$. Then $A+B$ is the generator of a positive bi-continuous semigroup.
\end{lemma}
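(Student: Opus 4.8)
The plan is to verify the hypotheses of the bi-continuous Miyadera--Voigt theorem (Theorem~\ref{thm:BiMV}) for the pair $(A,B)$ — obtaining a bi-continuous semigroup $\sg{S}$ generated by $A+B$ — and then to read off its positivity. Two preliminary observations. Since $\sg{T}$ is positive, \eqref{eq:bicontlaplace} gives $\res(\mu,A)\geq0$ for $\mu>\omega$, the resolvent identity propagates this to $\mu>\mathrm{s}(A)$, and $\mu\mapsto\res(\mu,A)$ is decreasing on $X_+$ in the operator order; as $B\geq0$ this yields $\|B\res(\mu,A)x\|\leq\|B\res(\lambda,A)x\|<\tfrac1{2M}\|x\|$ for $x\in X_+$ and $\mu\geq\lambda$, so in the estimate below we may take $\lambda>\omega$ (locality of $B\res(\lambda,A)$ surviving this replacement by the resolvent identity and Remark~\ref{rem:localtight}(a)), while the operator-norm bound used at the very end is kept at the original $\lambda$. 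Also $B=(B\res(\lambda,A))(\lambda-A)$ on $\dom(A)$, so $B$ is bounded from $(\dom(A),\|\cdot\|_A)$ to $X$ and, by locality of $B\res(\lambda,A)$, continuous from $(\dom(A),\tau_A)$ to $(X,\tau)$ on $\|\cdot\|_A$-bounded sets; thus the standing assumptions of Theorem~\ref{thm:BiMV} hold.

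The heart of the matter is the Miyadera estimate on the positive cone. Fix $x\in\dom(A)\cap X_+$; then $T(s)x\geq0$, hence $BT(s)x\geq0$. Writing $BT(s)x=B\res(\lambda,A)T(s)(\lambda-A)x$, pulling the bounded operator $B\res(\lambda,A)$ out of the $\tau$-improper integral and using \eqref{eq:bicontlaplace}, one gets $\int_0^\infty\ee^{-\lambda s}BT(s)x\,\dd s=B\res(\lambda,A)x$. Letting $\varphi\in X'$ be the positive norm-one functional of the $\AL$-space with $\varphi|_{X_+}=\|\cdot\|$, the positivity of the integrand gives
\[
\int_0^\infty\ee^{-\lambda s}\|BT(s)x\|\,\dd s=\varphi\!\left(\int_0^\infty\ee^{-\lambda s}BT(s)x\,\dd s\right)=\|B\res(\lambda,A)x\|<\tfrac1{2M}\|x\|\qquad(x\in\dom(A)\cap X_+).
\]
Hence $\int_0^{t_0}\|BT(s)x\|\,\dd s\leq\ee^{\lambda t_0}\tfrac1{2M}\|x\|$ for such $x$, and $s\mapsto\|BT(s)x\|$ is bounded on $[0,t_0]$ for every $x\in\dom(A)$ (bound $\|B\res(\lambda,A)\|M\ee^{|\omega|t_0}\|(\lambda-A)x\|$). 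Likewise, for $p\in\semis_+$ (additive and positively homogeneous on $X_+$) and $x\in\dom(A)\cap X_+$, Theorem~\ref{thm:BiProp}(d) gives $\int_0^{t_0}p(BT(s)x)\,\dd s=p\bigl(B\res(\lambda,A)[\lambda\int_0^{t_0}T(s)x\,\dd s-T(t_0)x+x]\bigr)$, and locality of $B\res(\lambda,A)$ and of $\sg{T}$, together with $\semis_+$ generating $\tau$, yield a bound of the form $Kq(x)+\varepsilon\|x\|$.

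The step I expect to be the genuine obstacle is that Theorem~\ref{thm:BiMV} asks for (i)--(iii) on all of $\dom(A)$, whereas the above only provides them on $\dom(A)\cap X_+$; since $\dom(A)$ need not be a sublattice, a general $x$ cannot simply be reduced to $|x|$. In the positive, bi-$\AL$ setting one handles this by inspecting the Dyson--Phillips construction behind Theorem~\ref{thm:BiMV} on positive data: there every operator produced is positive and is controlled by the cone bound above, and one uses that $\dom(A)\cap X_+$ is bi-dense in $X_+$ with constant arbitrarily close to $M$ — from $\mu\res(\mu,A)y\to y$, $\|\mu\res(\mu,A)y\|\leq M\tfrac{\mu}{\mu-\omega}\|y\|$ for $y\in X_+$ — so that the Miyadera constant $K\approx\tfrac1{2M}$ meets the smallness threshold $K<\tfrac1M$. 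This is exactly why the hypothesis demands $\|B\res(\lambda,A)\|<\tfrac1{2M}$ rather than merely $<1$, and it produces the bi-continuous semigroup $\sg{S}$ with generator $A+B$, which is positive on $X_+$ (hence positive) because it is assembled from positive pieces and the positive cone of a bi-$\AL$ space is closed under the limits involved.

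Finally, it is worth recording that the positivity can also be obtained from resolvent positivity of $A+B$: since $\|B\res(\lambda,A)\|<1$, the factorisation $\lambda-(A+B)=(I-B\res(\lambda,A))(\lambda-A)$ gives $\res(\lambda,A+B)=\res(\lambda,A)\sum_{n\geq0}(B\res(\lambda,A))^n\geq0$, each summand being positive since $\res(\lambda,A)\geq0$ and $B\res(\lambda,A)$ maps $X_+$ into $X_+$ (because $\res(\lambda,A)X_+\subseteq\dom(A)\cap X_+$); propagating by the resolvent identity, $\res(\mu,A+B)\geq0$ for all large $\mu$, and $\sg{S}$, recovered from $A+B$ through nonnegative powers of its resolvent (equivalently through the Yosida approximants of $A+B$, whose exponentials are nonnegative), is therefore positive.
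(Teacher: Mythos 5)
Your estimates on the positive cone are essentially the paper's: additivity of the norm and of the seminorms in $\semis_+$ on $X_+$ lets you pull the norm (resp.\ $p$) outside the integral of the positive integrand $\ee^{-\lambda s}BT(s)x$, identify the result with $B\res(\lambda,A)x$ via the Laplace representation \eqref{eq:bicontlaplace}, and invoke $\left\|B\res(\lambda,A)\right\|<\frac{1}{2M}$ together with locality of $B\res(\lambda,A)$. Up to that point you are on the paper's track, and you correctly identify the real obstacle: Theorem \ref{thm:BiMV} demands (i)--(iii) on all of $\dom(A)$, while the cone argument only covers $\dom(A)\cap X_+$.

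At exactly that point, however, your proposal has a genuine gap. You propose to ``inspect the Dyson--Phillips construction behind Theorem \ref{thm:BiMV} on positive data,'' but you do not carry this out, and it cannot be done as a black-box appeal: the Dyson--Phillips iterates must converge on all of $X$, and a general $x\in\dom(A)$ decomposes as $x_+-x_-$ with $x_\pm\in X_+$ but \emph{not} in $\dom(A)$, so the same difficulty reappears inside the construction you would be re-proving. The paper instead extends the estimates themselves to every $x\in\dom(A)$, which is the missing idea: write $x=x_+-x_-$, regularize by $x_{n,\pm}:=n\int_0^{1/n}T(t)x_\pm\,\dd t\in\dom(A)\cap X_+$ (positivity of $\sg{T}$ keeps these in the cone), apply the cone estimate to $x_{n,+}$ and $x_{n,-}$ separately, and use the $\AL$-identity $\left\|x_+\right\|+\left\|x_-\right\|=\left\|\,|x|\,\right\|=\left\|x\right\|$ so that the two contributions recombine without doubling the constant (the supremum $M$ of $\left\|T(t)\right\|$ absorbs the norm growth of the regularization, which is why $\frac{1}{2M}$ rather than $\frac12$ is assumed). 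One then passes to the limit $n\to\infty$ using the norming property of $(X,\tau)'$ for condition (ii), and the locality of $B\res(\lambda,A)$ and of $\sg{T}$ applied to $y_n:=(\lambda-A)(x_{n,+}-x_{n,-})\to(\lambda-A)x$ for condition (iii). After this extension, Theorem \ref{thm:BiMV} applies directly; no modification of its proof is needed. Your closing observation that positivity of the perturbed semigroup follows from $\res(\lambda,A+B)=\res(\lambda,A)\sum_{n\geq0}(B\res(\lambda,A))^n\geq0$ and the generation formula is sound and is in fact a point the paper leaves implicit, but it does not repair the generation step above.
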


\begin{proof}
We start by establishing property $\mathrm{(ii)}$ in Theorem \ref{thm:BiMV}. By Remark \ref{rem:Equiv}$\mathrm{(a)}$ it suffices that we have the following estimate for each $x\in\dom(A)_+$
\begin{align*}
\int_0^t{\left\|B\ee^{-\lambda s}T(s)x\right\|\ \dd{s}}
&=\left\|\int_0^{t}B\ee^{-\lambda s}T(s)x\ \dd{s}\right\|\\
&=\left\|B\int_0^t{\ee^{-\lambda s}T(s)x\ \dd{s}}\right\|\\
&\leq\left\|B\res(\lambda,A)x\right\|\\
&<\frac{1}{2M}\left\|x\right\|,
\end{align*}
where the first equality is justified by the $\AL$-property of the space and the fact that if $x\in\dom(A)_+$ then it is element of the space of strong continuity (which in fact coincides with $\underline{X}_0:=\overline{\dom(A)}^{\left\|\cdot\right\|}$). The second equality follows by the fact that the operator $B$ is a continuous map from $(\dom(A),\tau_A)$ to $(X,\tau)$ which is $A$-bounded by \cite[Lemma~4.1]{PertPosAppl2006}, i.e., there exists $a,b\geq0$ such that $\left\|Bx\right\|\leq a\left\|Ax\right\|+b\left\|x\right\|$ for each $x\in\dom(A)$. The first inequality follows by the Laplace transform representation of the resolvent $\res(\lambda,A)$. As a consequence we conclude that property $\mathrm{(ii)}$ of Theorem \ref{thm:BiMV} holds.  
Now let $\varepsilon>0$ and $p\in\semis_+$ be arbitrary. Then
\begin{align*}
\int_0^{t_0}{p\left(\ee^{-\lambda s}BT(s)x\right)\ \dd{s}}&=p\left(\int_0^{t_0}{\ee^{-\lambda s}BT(s)x\ \dd{s}}\right)=p\left(B\int_0^{t_0}{\ee^{-\lambda s}T(s)x\ \dd{s}}\right)\\
&\leq p\left(B\res(\lambda,A)x\right)\leq Kq(x)+\varepsilon\left\|x\right\|.
\end{align*}
Here the first step follows by the properties of the bi-$\AL$ space. The second one follows by the same argument as before and the last inequality follows by the assumption of localness of the operator $B\res(\lambda,A)$. Hence property $\mathrm{(iii)}$ is fullfilled. Moreover also $\mathrm{(i)}$ holds since $(B,\dom(B))$ is $A$-bounded by \cite[Lemma~4.1]{PertPosAppl2006}.

\medskip
Keep in mind that we showed that the properties $\mathrm{(i)}-\mathrm{(iii)}$ of Theorem \ref{thm:BiMV} hold for $x\in\dom(A)_+$. We have to show that they hold true for each $x\in\dom(A)$. In the first place we start showing that the norm condition $\mathrm{(ii)}$ of Theorem \ref{thm:BiMV} holds from each $x\in\dom(A)$. For this purpose, let $x\in\dom(A)$ and find $x_+,x_-\in X_+$ such that $x=x_+-x_-$. Especially, take $x_+:=x\vee0$ and $x_-:=-(x\wedge0)$ and observe that $x_++x_-=\left|x\right|$. For $n\in\NN$ define
\[
x_{n,\pm}:=n\int_0^{\frac{1}{n}}{T(t)x_{\pm}\ \dd{t}}\in\dom(A),
\]
and notice that $x_{n,\pm}\stackrel{\tau}{\rightarrow}x_{\pm}$ in $X$ and $x_{n,+}-x_{n,-}\stackrel{\tau_A}{\rightarrow}x$. The sequences $(x_{n,+})_{n\in\NN}$ and $(x_{n,-})_{n\in\NN}$, and hence also $(x_{n,+}-x_{n,-})_{n\in\NN}$, are norm-bounded, i.e.,
\[
\left\|x_{n,\pm}\right\|=\left\|n\int_0^{\frac{1}{n}}{T(t)x_{\pm}\ \dd{t}}\right\|\leq n\int_0^{\frac{1}{n}}{\left\|T(t)x_{\pm}\right\|\ \dd{t}}\leq L\left\|x_{\pm}\right\|,
\]
where $L:=\sup_{t\in\left[0,\frac{1}{n}\right]}{\left\|T(t)\right\|}<\infty$. Hence we obtain
\begin{align}\label{eqn:ProofLemma}
\int_0^t{\left\|\ee^{-\lambda s}BT(s)(x_{n,+}-x_{n,-})\right\|\ \dd{s}}<\frac{1}{2M}\left(\left\|x_{n,+}\right\|+\left\|x_{n,-}\right\|\right)=\frac{1}{2M}\left\|x\right\|.
\end{align}
We have to show that the integrand converges to $\left\|\ee^{-\lambda s}BT(s)x\right\|$ in order to conclude the desired estimate. Observe that for each $\varphi\in(X,\tau)'$
\begin{align*}
\varphi\left(\int_0^t{\ee^{-\lambda s}BT(s)x\ \dd{s}}\right)&=\int_0^t{\varphi\left(\ee^{-\lambda s}BT(s)x\right)\ \dd{s}}\\
&=\int_0^t{\tlim_{n\to\infty}\varphi\left(\ee^{-\lambda s}BT(s)(x_{n,+}-x_{n,-})\right)\ \dd{s}}\\
&\leq\tlimi_{n\to\infty}\int_0^t{{\left\|\ee^{-\lambda s}BT(s)(x_{n,+}-x_{n,-})\right\|\ \dd{s}}}=\frac{1}{2M}\left\|x\right\|
\end{align*}
Due to the norming property of the local convex topology (cf. Assumption \ref{asp:bicontspace}) and the $\AL$-property we obtain by taking the supremum over all $\varphi\in(X,\tau)'$ with $\left\|\varphi\right\|\leq1$ the following inequality
\[
\int_0^t{\left\|\ee^{-\lambda s}BT(s)x\right\|\ \dd{s}}=\frac{1}{2M}\left\|x\right\|.
\]

\medskip
To show the third requirement of Theorem \ref{thm:BiMV} is valid, let $\varepsilon>0$ and $p\in\semis_+$ be arbitrary and observe that there exists $K>0$ and $q\in\semis$ such that for each $n\in\NN$
\medskip
\begin{align*}
\int_0^{t_0}{p(\ee^{-\lambda s}BT(s)(x_{n,+}-x_{n,-}))\ \dd{s}}&\leq\int_0^{t_0}{p(\ee^{-\lambda s}BT(s)x_{n,+})\ \dd{s}}+\int_0^{t_0}{p(\ee^{-\lambda s}BT(s)x_{n,-})\ \dd{s}}\\
&\leq K\left(q(x_{n,+})+q(x_{n,-})\right)+\varepsilon\left(\left\|x_{n,+}\right\|+\left\|x_{n,i}\right\|\right)\\
&\leq K\left(q(x_{n,+})+q(x_{n,-})\right)+2L\varepsilon\left\|x\right\|\\
\end{align*}
Since by construction $x_{n,+}\to x_+$ and $x_{n,-}\to x_-$ we conclude that $q(x_{n,+})+q(x_{n,-})\to q\left(\left|x\right|\right)=q(x)$. The left-hand side also converges. To see this define for a fixed $\lambda> s(A)$ a sequence $(y_n)_{n\in\NN}$ in $X$ for $n\in\NN$ by
\[
y_n:=(\lambda-A)(x_{n,+}-x_{n,-}),
\]
and set
\[
y:=\tlim_{n\to\infty}{y_n}=(\lambda-A)x.
\]
Then by localness on the operator $B\res(\lambda,A)$ and the semigroup $(T(t))_{t\geq0}$ we find $K>0$ and $q,q'\in\semis$ such that
\begingroup
\allowdisplaybreaks
\begin{align*}
&\left|p(\ee^{-\lambda s}B\res(\lambda,A)T(s)y_n)-p(\ee^{-\lambda s}B\res(\lambda,A)T(s)y)\right|\\
\leq&p(\ee^{-\lambda s}B\res(\lambda,A)T(s)y_n-\ee^{-\lambda s}B\res(\lambda,A)T(s)y)\\
\leq&\ee^{-\lambda s}p(B\res(\lambda,A)T(s)(y_n-y))\\
\leq&\ee^{-\lambda s}\left(Kq(T(s)(y_n-y))+\varepsilon\left\|T(s)(y_n-y)\right\|\right)\\
\leq&K'q(T(s)(y_n-y))+\ee^{(\omega-\lambda)s}\varepsilon\left\|y_n-y\right\|\\
\leq&K''q'(y_n-y)+\varepsilon(K'+\ee^{(\omega-\lambda)s})\left\|y_n-y\right\|\\
\leq&K''q'(y_n-y)+\varepsilon M(K'+\ee^{(\omega-\lambda)s}),
\end{align*}
\endgroup
where $K':=K\ee^{-\lambda s}$ and $K''>0$ is a product of $K'$ and a constant coming from the localness of the semigroup $(T(t))_{t\geq0}$. Moreover, $M\geq0$ is a constant arising from the exponential boundedness of the semigroup. Since $y_n\stackrel{\tau}{\rightarrow}y$ and $\varepsilon>0$ was arbitrary we see that the convergence of the integrand is uniform in $s$. Thus the integral converges and we are done.
\end{proof}

The main work for proving Theorem \ref{thm:BiMVPos} is included in the previous Lemma \ref{lem:Aux}. Now we are able to accomplish the proof of our main theorem which runs in fact parallel to the original proof of \cite[Thm.~0.1]{Voigt1989}.

\begin{proofof}{Theorem \ref{thm:BiMVPos}}
By \cite[Thm.~1.1]{Voigt1989} there exists a $\lambda\in\rho(A+B)$ such that $r(B\res(\lambda,A))<1$. Moreover
\[
\res(\lambda,A)\leq\res(\lambda,A)\sum_{n=0}^{\infty}{\left(B\res(\lambda,A)\right)^n}=\res(\lambda,A+B).
\]
Now, by taking $sB$ instead of $B$ for $s\in\left[0,1\right]$ we obtain by the above
\[
\res(\lambda,A)\leq\res(\lambda,A+sB)\leq\res(\lambda,A+B).
\]
Since $B$ is positive and $\mathrm{Ran}(\res(\lambda,A+B))=\dom(A)$, we conclude that $B\res(\lambda,A+B)\in\LLL(X)$. Therefore, also $2\eta B\res(\lambda,A+B)\in\LLL(X)$ and there exists $n\in\NN$ such that
\[
\left\|2\eta B\res(\lambda,A+B)\right\|<n.
\]
Hence
\[
\left\|\frac{1}{n}B\res(\lambda,A+sB)\right\|<\frac{1}{2\eta},
\]
for each $s\in\left[0,1\right]$. In particular, one has
\[
\left\|\frac{1}{n}B\res\left(\lambda,A+\frac{j}{n}B\right)\right\|<\frac{1}{2\eta},
\]
for $0\leq j\leq n-1$. Now we apply Lemma \ref{lem:Aux} for the perturbation $\frac{1}{n}B$ repeatedly for $A,A+\frac{1}{n}B,\ldots,A+\frac{n-1}{n}B$ and obtain the generation by $A+B$ in the last step.
\end{proofof}

\section{Examples}\label{sec:Examples}
\subsection{Rank-one perturbations}

As a first example we consider rank-one perturbations as they are treated for $C_0$-semigroups by Arendt in Rhandi \cite[Thm.~2.2]{ArendtRhandi1991}. Let $(T(t))_{t\geq0}$ be a positive bi-continuous semigroup on $X$ with respect to $\tau$ with generator $(A,\dom(A))$.
Let $\semis$ be the directed family of seminorms corresponding to $\tau$ and let $\tau_A$ be the locally convex topology on $\dom(A)$ determined by the family of seminorms $\semis_A:=\left\{p(\cdot)+q(A\cdot):\ p,q\in\semis\right\}$. For a positive $\tau_A$-continuous linear functional $\varphi:\dom(A)\to\RR$ and for $y\geq0$ in $X$ we define the rank-one perturbation $B:\dom(A)\to X$ by
\[
Bx:=\varphi(x)y,\quad x\in\dom(A).
\]
The operator $(B,\dom(A))$ satisfies the assumptions of Theorem \ref{thm:BiMVPos}, and hence it is a Miyadera--Voigt perturbation. To see this let $\varepsilon'>0$ and $p\in\mathscr{P}$ be arbitrary and observe that
\[
p\left(B\res(\lambda,A)x\right)=p\left(\varphi\left(\res(\lambda,A)x\right)y\right)=\left|\varphi\left(\res(\lambda,A)x\right)\right|p(y),\quad x\in X.
\]
Since $\varphi$ is $\tau_A$-continuous we conclude that there exists $M>0$ and $p',q'\in\semis$ such that
\[
\left|\varphi(\res(\lambda,A)x)\right|\leq M\left(p'(\res(\lambda,A)x)+q'(x)\right),\quad x\in X.
\]
By \cite[Lemma~1.2.23]{FaPhD} and Remark \ref{rem:localtight} the operator $R(\lambda,A)$, $\lambda\in\rho(A)$, is local, i.e., for each $\varepsilon'>0$ there exists $K'>0$ and $q''\in\semis$ such that
\[
p'(\res(\lambda,A)x)\leq K'q''(x)+\varepsilon'\left\|x\right\|,\quad x\in X.
\]
With $K'':=p(y)MK'$ this leads to the following inequality
\[
p(B\res(\lambda,A)x)\leq K''\left(q''(x)+\frac{1}{M}q'(x)\right)+\varepsilon\left\|x\right\|,\quad x\in X.
\]
where $\varepsilon=Mp(y)\varepsilon'$. We conclude from Remark \ref{rem:SumSeminorm} that $q''+\frac{1}{M}q'\in\semis$, showing that $B\res(\lambda,A)$ is local. Since $(A,\dom(A))$ is a Hille--Yosida operator one gets
\[
\left\|B\res(\lambda,A)x\right\|\leq\left|\varphi(\res(\lambda,A)x\right|\cdot\left\|y\right\|\leq\left\|\varphi\right\|\left\|\res(\lambda,A)x\right\|\left\|y\right\|\leq\left\|\varphi\right\|\frac{M}{\left|\lambda-\omega\right|}\left\|x\right\|\left\|y\right\|.
\]
Since all constants are fixed from the beginning our expression becomes smaller than $1$ by chosing $\lambda>\omega$ large enough, this shows that rank-one perturbations fit in our setting, i.e., $A+B$ generates a bi-continuous semigroup by an application of Theorem \ref{thm:BiMVPos}. Observe that the argumentation can be extended to finite rank operators.

\subsection{Gauss--Weierstrass semigroup on $\M(\RR)$}\label{subsec:GWMeasure}

Now we discuss a second example, i.e., we consider the space $\M(\Omega)$ of bounded Borel measures on $\Omega$, where $\Omega$ is a Polish space. 
First of all we show that $\M(\Omega)$ satisfies the conditions of Definition \ref{def:biAL}, i.e., we show that the space $\M(\Omega)$ of bounded Borel measures is a bi-$\AL$-space with respect to an appropiate locally convex topology. As already mentioned in Section \ref{subsec:AdBiCont} every bi-continuous semigroup on $\BC(\Omega)$ with respect to the compact-open topology $\tau_{\mathrm{co}}$ gives rise to a bi-continuous semigroup on $\M(\Omega)$ with respect to $\tau=\sigma(\M(\Omega),\BC(\Omega))$ and vice versa. So the locally convex topology on $\M(\Omega)$ is given by the weak$^*$-topology generated by the family of seminorms given by
\[
\semis:=\left\{p_f:\ f\in\BC(\RR)\right\},
\]
where 
\[
p_f(\mu):=\left|\int_{\Omega}{f\ \dd\mu}\right|,\quad \mu\in\M(\Omega).
\]
The norm on $\M(\Omega)$ is the total variation norm, which turns the space into a Banach space. Moreover, the norm satisfies the first condition of Definition \ref{def:biAL}, i.e., $\left\|\mu+\nu\right\|_{\M(\Omega)}=\left\|\mu\right\|_{\M(\Omega)}+\left\|\nu\right\|_{\M(\Omega)}$. 
What is more, even the second condition of the additivity of the seminorms is fullfilled. In fact, let
\[
\semis_+:=\left\{p_{f}:\ f\in\BC(\Omega),\ f\geq0\right\},
\]
and let $\tau_+$ denote the locally convex topology generated by $\semis_+$. We first observe that for each $p=p_f\in\semis_+$ one has for $\mu,\nu\geq0$
\begin{align*}
p_f(\mu)+p_f(\nu)
&=\left|\int_{\Omega}{f\ \dd{\mu}}\right|+\left|\int_{\Omega}{f\ \dd{\nu}}\right|\\
&=\int_{\Omega}{f\ \dd{\mu}}+\int_{\Omega}{f\ \dd{\nu}}\\
&=\int_{\Omega}{f\ \dd{(\mu+\nu)}}\\
&=\left|\int_{\Omega}{f\ \dd{(\mu+\nu)}}\right|\\
&=p_f(\mu+\nu).
\end{align*}

We claim that $\tau=\tau_+$.

\begin{lemma}
The family $\semis_+$ generates the topology $\tau$, i.e., for a net $(\mu_{\alpha})_{\alpha\in\Lambda}$ in $\M(\Omega)$ and $\mu\in\M(\Omega)$:
\[
\mu_{\alpha}\stackrel{\tau}{\rightarrow}\mu\ \Longleftrightarrow\ \mu_{\alpha}\stackrel{\tau_+}{\rightarrow}\mu
\]
\end{lemma}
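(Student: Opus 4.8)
The plan is to show the two topologies $\tau = \sigma(\M(\Omega),\BC(\Omega))$ and $\tau_+$ coincide by proving that each seminorm $p_f$ with $f \in \BC(\Omega)$ is $\tau_+$-continuous; since trivially $\semis_+ \subseteq \semis$ gives $\tau_+ \subseteq \tau$, this suffices. The key observation is that an arbitrary $f \in \BC(\Omega)$ can be decomposed, and the natural decomposition to use is $f = f^+ - f^-$ where $f^+ = f \vee 0$ and $f^- = (-f) \vee 0$ are both nonnegative and bounded continuous, hence $f^\pm \in \{g \in \BC(\Omega): g \geq 0\}$. From this one gets, for any $\mu \in \M(\Omega)$,
\[
p_f(\mu) = \left|\int_\Omega f\,\dd\mu\right| = \left|\int_\Omega f^+\,\dd\mu - \int_\Omega f^-\,\dd\mu\right| \leq p_{f^+}(\mu) + p_{f^-}(\mu),
\]
so $p_f \leq p_{f^+} + p_{f^-}$ as seminorms, and the right-hand side is a finite sum of members of $\semis_+$. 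By Remark \ref{rem:SumSeminorm} we may assume $\semis_+$ is stable under finite positive linear combinations, so $p_{f^+} + p_{f^-} \in \semis_+$, which shows every basic $\tau$-seminorm is dominated by a $\tau_+$-seminorm; hence $\tau \subseteq \tau_+$.

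Then I would translate this into the net-convergence statement requested. If $\mu_\alpha \stackrel{\tau_+}{\to}\mu$, then $p_g(\mu_\alpha - \mu) \to 0$ for every $g \in \BC(\Omega)$ with $g \geq 0$; applying the inequality above to $f$ arbitrary and to $\mu_\alpha - \mu$ in place of $\mu$ gives $p_f(\mu_\alpha - \mu) \leq p_{f^+}(\mu_\alpha - \mu) + p_{f^-}(\mu_\alpha - \mu) \to 0$, so $\mu_\alpha \stackrel{\tau}{\to}\mu$. The converse implication is immediate since $\semis_+ \subseteq \semis$. One should be slightly careful that $p_f$ is genuinely a seminorm on the real vector space $\M(\Omega)$ (or note that in the complex case one decomposes real and imaginary parts first, then positive and negative parts), but this is routine.

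The only place warranting attention is the lattice decomposition $f = f^+ - f^-$: one must check $f^+, f^-$ are indeed bounded and continuous, which holds because $t \mapsto t \vee 0$ is continuous and $\Omega$-pointwise application preserves boundedness and continuity — there is no obstacle here. I do not anticipate a genuinely hard step; the proof is essentially the observation that $\BC(\Omega)$ is the linear span of its positive cone together with the $\AL$-type triangle estimate, combined with the Remark \ref{rem:SumSeminorm} convention that lets us absorb the two-term sum back into $\semis_+$.
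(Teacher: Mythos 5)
Your proof is correct and follows essentially the same route as the paper: both arguments rest on the lattice decomposition $f=f^+-f^-$ into nonnegative bounded continuous functions, the paper phrasing it directly as convergence of the integrals $\int_\Omega f_\pm\,\dd\mu_\alpha$ while you phrase it as the seminorm estimate $p_f\leq p_{f^+}+p_{f^-}$. The extra care you take (invoking Remark \ref{rem:SumSeminorm} to absorb the sum into $\semis_+$, and checking continuity and boundedness of $f^\pm$) is sound but does not change the substance of the argument.
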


\begin{proof}
Suppose that 
\begin{align}\label{eqn:ConvTau}
\int_{\Omega}{f\ \dd\mu_{\alpha}}\rightarrow\int_{\Omega}{f\ \dd{\mu}},
\end{align}
for each $f\in\BC(\Omega)$, then obviously \eqref{eqn:ConvTau} also holds for every positive $f\in\BC(\Omega)$. For the other implication notice that for arbitrary $f\in\BC(\Omega)$ one can decompose $f$ into a positive and negative part, i.e., there exist positive $f_+,f_-\in\BC(\Omega)$ such that
\[
f=f_+-f_-.
\]
Then
\[
\int_{\Omega}{f_+\ \dd\mu_{\alpha}}\rightarrow\int_{\Omega}{f_+\ \dd{\mu}},
\]
and
\[
\int_{\Omega}{f_-\ \dd\mu_{\alpha}}\rightarrow\int_{\Omega}{f_-\ \dd{\mu}}.
\]
Since
\[
\int_{\Omega}{f\ \dd\mu_{\alpha}}=\int_{\Omega}{f_+\ \dd\mu_{\alpha}}-\int_{\Omega}{f_-\ \dd{\mu_{\alpha}}},
\]
we observe that
\[
\int_{\Omega}{f\ \dd\mu_{\alpha}}\rightarrow\int_{\Omega}{f_+\ \dd{\mu}}-\int_{\Omega}{f_-\ \dd{\mu}},
\]
and hence we are done.
\end{proof}

We now consider the special case $\Omega=\RR$, i.e., we consider the space $\M(\RR)$ of bounded Borel measures on $\RR$ with respect to the Borel $\sigma$-algebra $\BBB(\RR)$. Notice that since we consider bounded Borel measures on the separable metric space $\RR$ every $\mu\in\M(\RR)$ is also a Radon measure. 

We first recall the definition of the (centered) Gaussian measure $\gamma_t$, $t>0$, on $\RR$ defined by
\[
\gamma_t(C)=\frac{1}{\sqrt{2\pi t}}\int_{C}{\ee^{-\frac{\left|x\right|^2}{2t}}\ \dd{\lambda^1}},\quad C\in\BBB(\RR),
\]
where $\lambda^1$ denotes the Lebesgue measure on $\RR$. This measure $\gamma_t$, $t>0$, is a strictly positive bounded Borel measure and in particular a Radon measure. As a matter of fact, $\gamma_t$, $t>0$, is absolutely continuous with respect to the Lebesgue measure with density $\varphi_t$ given by
\[
\varphi_t(x)=\frac{1}{\sqrt{2\pi t}}\ee^{-\frac{\left|x\right|^2}{2t}}.
\]
Next we recall the convolution of measures. For $\mu,\nu\in\M(\RR)$ one defines the convolution of $\mu$ and $\nu$ by
\[
(\mu\ast\nu)(\Omega)=\int_{\RR}{\int_{\RR}{\textbf{1}_{\Omega}(x+y)\ \dd\mu(x)\ \dd\nu(y)}}=\int_{\RR}{\nu(\Omega-x)\ \dd\mu(x)}.
\]

\begin{remark}\label{rem:density}
If one of the measures $\mu,\nu\in\M(\RR)$ has some density with respect to the Lebesgue measure, say $\nu=g\cdot\lambda^1$ for some non-negative, integrable function relative to the Lebesgue measure, as it is the case for $\gamma_t$, then by an application of Tonelli's theorem and the translation invariance of the Lebesgue measure, the convolution has density $g\ast\mu$ defined by
\[
(g\ast\mu)(x)=\int_{\RR}{g(x-y)\ \dd\mu(y)}.
\]
\end{remark}

Now we define a family of operators $(T(t))_{t\geq0}$ on $\M(\RR)$ by $T(0)\mu=\mu$ and 
\begin{align}\label{eqn:semig}
T(t)\mu=\gamma_t\ast\mu,\quad t>0.
\end{align}

Since $\gamma_t\geq0$ for each $t>0$ we conclude that the family $(T(t))_{t\geq0}$ consists of positive operators on $\M(\RR)$. The next result shows that $(T(t))_{t\geq0}$ is in fact a bi-continuous semigroup on $\M(\RR)$ equipped with the total variation norm and the topology $\sigma(\M(\RR),\BC(\Omega))$. For that recall that the so-called Gauss--Weierstrass semigroup on $\BC(\RR)$, here denoted by $(T_*(t))_{t\geq0}$, is defined by $T(0)f:=f$ and
\[
T_*(t)f:=\varphi_t\ast f,\quad t>0.
\]
As a matter of fact, $(T_*(t))_{t\geq0}$ is a bi-continuous semigroup on $\BC(\RR)$ with respect to the compact-opem topology, cf. \cite[Sect.~2.4]{FaPhD}. We now relate our semigroup $(T(t))_{t\geq0}$ to $(T_*(t))_{t\geq0}$.

\begin{theorem}
The semigroup $(T(t))_{t\geq0}$ is the adjoint of the Gauss--Weierstrass semigroup $(T_*(t))_{t\geq0}$ on $\BC(\RR)$. 
\end{theorem}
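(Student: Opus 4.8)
The plan is to unwind what ``adjoint of the Gauss--Weierstrass semigroup'' means and then to verify the identity by a single Fubini computation. By Theorem \ref{thm:DualCbM} (together with the identification $\BC(\RR)^{\circ}=\M(\RR)$ recalled before it), the adjoint semigroup is $(T_*^{\circ}(t))_{t\geq0}$ with $T_*^{\circ}(t)=T_*'(t)|_{\M(\RR)}$, so the claim is precisely that $T(t)\mu=\gamma_t\ast\mu$ coincides with $T_*'(t)\mu$ for every $t\geq0$ and every $\mu\in\M(\RR)$. For $t=0$ both operators are the identity, so I would fix $t>0$ from now on.

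First I would note that for $f\in\BC(\RR)$ the function $T_*(t)f=\varphi_t\ast f$ again lies in $\BC(\RR)$ (this is part of the fact, recalled above, that $(T_*(t))_{t\geq0}$ is a bi-continuous semigroup on $\BC(\RR)$), so the dual pairing $\langle T_*(t)f,\mu\rangle=\int_{\RR}(\varphi_t\ast f)\,\dd\mu$ is well defined for all $\mu\in\M(\RR)$. Writing $(\varphi_t\ast f)(x)=\int_{\RR}\varphi_t(x-y)f(y)\,\dd\lambda^1(y)$, the heart of the argument is to interchange the two integrals: the integrand $(x,y)\mapsto\varphi_t(x-y)f(y)$ is jointly measurable and bounded in absolute value by $\|f\|_{\infty}\,\varphi_t(x-y)$, which is integrable with respect to $|\mu|\otimes\lambda^1$ because $\varphi_t\in\L^1(\RR)$ and $|\mu|(\RR)<\infty$; hence Fubini--Tonelli applies and, using that the Gaussian density $\varphi_t$ is even, gives
\[
\langle T_*(t)f,\mu\rangle=\int_{\RR}f(y)\Bigl(\int_{\RR}\varphi_t(y-x)\,\dd\mu(x)\Bigr)\dd\lambda^1(y).
\]
The inner integral $y\mapsto\int_{\RR}\varphi_t(y-x)\,\dd\mu(x)$ is exactly the Lebesgue density of the measure $\gamma_t\ast\mu$ identified in Remark \ref{rem:density}, so the right-hand side equals $\int_{\RR}f\,\dd(\gamma_t\ast\mu)=\langle f,T(t)\mu\rangle$.

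Putting these together I would conclude $\langle T_*(t)f,\mu\rangle=\langle f,T(t)\mu\rangle$ for all $f\in\BC(\RR)$ and $\mu\in\M(\RR)$, that is, $T(t)=T_*'(t)|_{\M(\RR)}=T_*^{\circ}(t)$; together with the trivial case $t=0$ this is the assertion of the theorem. The only genuinely delicate point is the Fubini interchange --- joint measurability of the Gaussian kernel and its integrability against $|\mu|\otimes\lambda^1$ --- and, running alongside it, keeping the orientations of the two different convolutions (of a function with a function, and of a function with a measure) consistent; the evenness of $\varphi_t$ is precisely what reconciles them, and once that is in place the remainder is bookkeeping. I would also remark that one need not check separately that $T_*'(t)$ maps $\M(\RR)$ into itself, since this is already supplied by Theorem \ref{thm:DualCbM}.
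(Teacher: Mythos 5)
Your proposal is correct and follows essentially the same route as the paper: both arguments reduce the claim to the identity $\langle T_*(t)f,\mu\rangle=\langle f,\gamma_t\ast\mu\rangle$ and establish it by a single Fubini interchange, using Remark \ref{rem:density} to identify the Lebesgue density of $\gamma_t\ast\mu$ with $\varphi_t\ast\mu$. You merely run the computation in the opposite direction and supply the integrability and symmetry details that the paper leaves implicit.
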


\begin{proof}
We observe that by Remark \ref{rem:density} for $f\in\BC(\RR)$ with $f\geq0$ and $\mu\in\M(\RR)$ the following holds.
\begin{align*}
\left\langle f,T(t)\mu\right\rangle&=\left|\int_{\RR}{f(x)\ \dd(T(t)\mu)(x)}\right|\\
&=\left|\int_{\RR}{f(x)\ \dd(\gamma_t\ast\mu)(x)}\right|\\
&=\left|\int_{\RR}{f(x)\ \dd((\varphi_t\cdot\lambda^1)\ast\mu)}(x)\right|\\
&=\left|\int_{\RR}{f(x)\ \dd((\varphi_t\ast\mu)\cdot\lambda^1)(x)}\right|\\
&=\left|\int_{\RR}{f(x)(\varphi_t\ast\mu)(x)\ \dd\lambda^1(x)}\right|\\
&=\left|\int_{\RR}{f(x)}\int_{\RR}{\varphi_t(x-y)\ \dd\mu(y)}\ \dd\lambda^1(x)\right|\\
&=\left|\int_{\RR}(f\ast\varphi_t)(y)\ \dd\mu(y)\right|\\
&=\left|\int_{\RR}{{T}_*(t)f(y)\ \dd\mu(y)}\right|\\
&=\left\langle {T}_*(t)f,\mu\right\rangle.
\end{align*}
Here $\left\langle \cdot,\cdot\right\rangle$ denotes the pairing between $\BC(\RR)$ and $\M(\RR)$. This proves the assertion.
\end{proof}

\begin{remark}\label{rem:PropGaussWei}
\begin{abc}
	\item The notation $(T_*(t))_{t\geq0}$ is intuitive since $(T_*(t))_{t\geq0}$ is the preadjoint of $(T(t))_{t\geq0}$.
	\item Since $({T}_*(t))_{t\geq0}$ is known to be bi-continuous on $\BC(\RR)$ with respect to the compact-open topology we conclude by Theorem \ref{thm:DualCbM} that $(T(t))_{t\geq0}$ on $\M(\RR)$ is bi-continuous with respect to $\sigma(\M(\RR),\BC(\Omega))$.
	\item Recall that the generator $(A_*,\dom(A_*))$ of the Gauss--Weierstrass semigroup on $\BC(\RR)$ is given by $(\Delta,\BC^2(\RR))$ where $\Delta$ denotes the Laplacian and $\BC^2(\RR)$ the space of twice continuous differentiable functions with bounded derivatives, cf. \cite{LB} or \cite{Lunardi}
	\item We also notice that $\mathbb{C}_+\subseteq\rho(A_*)$, where $\mathbb{C}_+$ denotes the right-half plane, i.e., all complex numbers with positive real part.
\end{abc}
\end{remark}

The next step is to determine the generator $(A,\dom(A))$ of $(T(t))_{t\geq0}$. To do so we have to consider differentiable measures. The original study of such measures is due to Fomin \cite{Fomin1968} and Skorohod \cite{Sk1957}, \cite[Chapter 4, Sect.~21]{Sk1974}. For more details, we refer to the work of Bogachev \cite{Bog2010}, where differentiable measures are the main subject.

\begin{definition}
A Borel measure $\mu\in\M(\RR)$ is called \emph{Skorohod differentiable} or \emph{S-differentiable} if, for every function $f\in\BC(\RR)$, the function
\[
t\mapsto\int_\RR{f(x-t)\ \dd\mu(x)},
\]
is differentiable.
\end{definition}

The following theorem \cite[Thm.~3.6.1]{Bog2010} shows that the previous definition is equivalent to the existence of a Borel measure $\nu$, called the Skorohod derivative of $\mu$, such that for each bounded continuous function on $\RR$ one has

\begin{align}\label{eqn:SkDiff}
\lim_{t\to0}{\int_\RR{\frac{f(x-t)-f(x)}{t}}\ \dd\mu(x)}=\int_\RR{f(x)\ \dd\nu(x)}.
\end{align}

\begin{theorem}\label{thm:SdiffMeas}
Let $\mu$ be a Skorohod differentiable Borel measure on $\RR$. Then there exists a Borel measure $\nu$ which is its  Skorohod derivative, i.e., $\nu$ satisfies \eqref{eqn:SkDiff} for all bounded continuous functions $f$ on $\RR$.
\end{theorem}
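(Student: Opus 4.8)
The plan is to prove Theorem~\ref{thm:SdiffMeas} by duality, identifying the Skorohod derivative $\nu$ as an element of $\M(\RR)=\BC(\RR)^\circ$ obtained from a bounded linear functional on $\BC(\RR)$ via the Riesz representation theorem. First I would fix the $\tau_{\mathrm{co}}$-continuous shift group $(S(t))_{t\in\RR}$ on $\BC(\RR)$ given by $(S(t)f)(x)=f(x-t)$, so that the defining condition of $S$-differentiability reads: the map $t\mapsto\langle S(t)f,\mu\rangle$ is differentiable for every $f\in\BC(\RR)$, where $\langle\cdot,\cdot\rangle$ is the $\BC(\RR)$--$\M(\RR)$ pairing. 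Denote its derivative at $t=0$ by $\ell(f):=\frac{\dd}{\dd t}\big|_{t=0}\langle S(t)f,\mu\rangle=\lim_{t\to0}\tfrac1t\langle S(t)f-f,\mu\rangle$. The map $\ell:\BC(\RR)\to\RR$ is clearly linear, and it is the pointwise limit of the linear functionals $\ell_t(f):=\tfrac1t\langle S(t)f-f,\mu\rangle$.

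The key step is to show $\ell$ is bounded, i.e.\ $|\ell(f)|\le C\|f\|_\infty$ for some constant $C$ independent of $f$. This does not follow formally from pointwise convergence of the $\ell_t$ together with uniform boundedness in the strong sense, since $\|\ell_t\|$ could in principle blow up; the standard device here is the uniform boundedness principle (Banach--Steinhaus) applied in the Banach space $\BC(\RR)$: since $\ell_t(f)$ converges as $t\to0$ for every fixed $f$, the family $\{\ell_{1/n}:n\in\NN\}$ is pointwise bounded on the Banach space $\BC(\RR)$, hence uniformly bounded in norm, so $\sup_n\|\ell_{1/n}\|<\infty$, and therefore $\|\ell\|\le\liminf_n\|\ell_{1/n}\|<\infty$. (One should remark that Banach--Steinhaus requires completeness of $\BC(\RR)$, which holds, and that it suffices to test along the sequence $t=1/n$ and then invoke differentiability to pass to the full limit.) I expect this boundedness argument to be the main obstacle, or at least the only non-routine point, because it is exactly where the hypothesis is upgraded from a family of pointwise statements to a single continuous functional.

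Once $\ell\in\BC(\RR)'$ is established, I would restrict attention to $\Omega=\RR$ a Polish (in particular locally compact, separable, complete metric) space and apply the Riesz--Markov representation theorem: every bounded linear functional on $\BC(\RR)$ that is, say, $\tau_{\mathrm{co}}$-continuous on norm-bounded sets --- equivalently every element of $\BC(\RR)^\circ$ --- is represented by integration against a unique $\nu\in\M(\RR)$, i.e.\ $\ell(f)=\int_\RR f\,\dd\nu$. To see $\ell\in\BC(\RR)^\circ$ rather than merely $\ell\in\BC(\RR)'$, note that each $\ell_t$ is of this form (it is $f\mapsto\langle S(t)f-f,\mu\rangle/t$ with $\mu\in\M(\RR)=\BC(\RR)^\circ$ and $S(t)$ local on $\BC(\RR)$ by Remark~\ref{rem:localtight}(b)), and $\BC(\RR)^\circ$ is norm-closed in $\BC(\RR)'$; combined with the norm bound $\sup_n\|\ell_{1/n}\|<\infty$ from the previous paragraph, a weak$^*$-limit-point argument (using that norm-bounded weak$^*$-compact subsets behave well, cf.\ the discussion preceding Remark~\ref{rem:localtight}) places $\ell$ in $\BC(\RR)^\circ=\M(\RR)$.

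Finally I would verify that this $\nu$ is indeed the Skorohod derivative in the sense of \eqref{eqn:SkDiff}: for fixed $f\in\BC(\RR)$,
\[
\lim_{t\to0}\int_\RR\frac{f(x-t)-f(x)}{t}\,\dd\mu(x)=\lim_{t\to0}\ell_t(f)=\ell(f)=\int_\RR f(x)\,\dd\nu(x),
\]
which is exactly \eqref{eqn:SkDiff}. Uniqueness of $\nu$ is immediate from the fact that $\BC(\RR)$ separates points of $\M(\RR)$ (indeed $\M(\RR)=\BC(\RR)^\circ$ by definition), so if two measures represent $\ell$ they agree. This completes the proof; the only genuinely substantive ingredient is the Banach--Steinhaus step producing a uniform bound on the difference quotients, everything else being bookkeeping with the identification $\BC(\RR)^\circ=\M(\RR)$ recalled in Section~\ref{subsec:AdBiCont}.
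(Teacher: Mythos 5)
The paper does not actually prove this statement --- it is quoted from \cite[Thm.~3.6.1]{Bog2010} --- so your attempt can only be judged on its own merits. The strategy (difference quotients $\ell_t(f)=\tfrac1t\langle S(t)f-f,\mu\rangle$, Banach--Steinhaus along $t=1/n$ to get $\sup_n\|\ell_{1/n}\|<\infty$, then a representation theorem) is a reasonable route, and the Banach--Steinhaus step is fine. But the step you flag as ``bookkeeping'' is where the real content lies, and your justification of it does not work. You need to show that the limit functional $\ell$ lies in $\BC(\RR)^{\circ}=\M(\RR)$ rather than merely in $\BC(\RR)'$ (whose elements are in general only finitely additive, or live on $\beta\RR\setminus\RR$). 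Norm-closedness of $\BC(\RR)^{\circ}$ in $\BC(\RR)'$ is irrelevant here, because $\ell_{1/n}\to\ell$ only pointwise (weak$^*$), not in norm. And a generic ``weak$^*$-limit-point argument'' fails: a norm-bounded sequence in $\M(\RR)$ can have all of its weak$^*$-cluster points in $\BC(\RR)'$ outside $\M(\RR)$ --- take $\nu_n=\delta_n$, whose cluster points are supported ``at infinity'' and are not Borel measures on $\RR$. So boundedness of $\{\ell_{1/n}\}$ plus pointwise convergence on $\BC(\RR)$ does not by itself produce a countably additive limit measure.

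What is actually needed at this point is to rule out escape of mass to infinity: either uniform tightness of the signed measures $\nu_{1/n}=n(\mu_{1/n}-\mu)$ (then Prokhorov gives a $\sigma(\M(\RR),\BC(\RR))$-convergent subsequence with limit in $\M(\RR)$), or, equivalently, the sequential completeness of norm-bounded sets of $\M(\RR)$ for $\sigma(\M(\RR),\BC(\RR))$ --- which is precisely condition (2) of Assumption \ref{asp:bicontspace} for the triple $(\M(\RR),\|\cdot\|_{TV},\sigma(\M(\RR),\BC(\RR)))$ and is itself a nontrivial theorem of Alexandrov/Prokhorov type, established in the setting of \cite{Farkas2011}. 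If you invoke that completeness explicitly, your argument closes: $(\nu_{1/n})$ is norm-bounded and $\sigma(\M(\RR),\BC(\RR))$-Cauchy (since $\int f\,\dd\nu_{1/n}$ converges for every $f\in\BC(\RR)$), hence converges to some $\nu\in\M(\RR)$, which then satisfies \eqref{eqn:SkDiff}. As written, however, the decisive ingredient is missing and is replaced by two facts (norm-closedness, weak$^*$-compactness in the full dual) that do not imply it. For comparison, Bogachev's own route in dimension one goes through the equivalence with densities of bounded variation (cf.\ Remark \ref{rem:SkoDiff}(b)), avoiding this compactness issue altogether.
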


\begin{remark}\label{rem:SkoDiff}
\begin{abc}
	\item The Skorohod derivative $\nu$ of $\mu$ as it appears in \eqref{eqn:SkDiff} and Theorem \ref{thm:SdiffMeas} is denoted by $\mathscr{D}\mu$, i.e., $\nu=\mathscr{D}\mu$.
	\item By \cite[Prop.~3.4.1]{Bog2010} one has that for a bounded Borel measure $\mu$ on $\RR$ the measure $\mu$ is Skorohod differentiable if and only if it has density of bounded variation, in particular every Skorohod differentiable measure on $\RR$ admits a bounded density. In that case the density of the Skorohod derivative is just the (distributional) derivative of the original density.
	\item For higher order derivatives we recall from \cite[Prop.~3.7.1]{Bog2010} that if the map $t\mapsto\int_{\RR}{f(x-t)\ \dd\mu(x)}$ is $n$-times differentiable, then $\mu$ is $n$-times Skorohod differentiable, i.e., for all $f\in\BC(\RR)$ the function
	\[
	(t_1,\ldots,t_n)\mapsto\int_{\RR}{f(x+t_1+\cdots+t_n)\ \dd\mu(x)},
	\]
	has partial derivatives $\partial_{t_1}\cdots\partial_{t_n}$.
	\item $\mu$ is Skorohod differentiable with Skorohod derivative $\mathscr{D}\mu$ if and only if for each $f\in\BC^{\infty}(\RR)$
	\[
	\int_{\RR}{\frac{\dd}{\dd{x}}f(x)\ \dd\mu(x)}=-\int_{\RR}{f(x)\ \dd(\mathscr{D}\mu)(x)},
	\]
	in particular one concludes that Skorohod differentiability of a measure $\mu$ is equivalent to that its derivative in the sense of distributions is a bounded measure, cf. \cite[Prop.~3.4.3]{Bog2010}.
	\end{abc}
\end{remark}



Now we prove the following result.

\begin{theorem}
The generator $(A,\dom(A))$ of $(T(t))_{t\geq0}$ is given by
\[
A\mu:=\Delta\mu,\quad \dom(A):=\left\{\mu\in\M(\RR): \mu\ \text{is twice Skorohod differentiable}\ \right\},
\]
where $\Delta\mu$ denotes the second order Skorohod derivative of $\mu$ introduced in Remark \ref{rem:SkoDiff}$\mathrm{(c)}$.
\end{theorem}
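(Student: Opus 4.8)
The strategy is to identify $(A,\dom(A))$ as the adjoint of the generator of the Gauss--Weierstrass semigroup $(T_*(t))_{t\geq0}$ on $\BC(\RR)$ and then to translate the resulting abstract description into the language of Skorohod derivatives. By the preceding theorem $T(t)=T_*^{\circ}(t)$ for all $t\geq0$. Since $(T_*(t))_{t\geq0}$ is bi-continuous on $\BC(\RR)$ for $\tau_{\mathrm{co}}$ (with $\BC(\RR)^{\circ}=\M(\RR)$, $\tau^{\circ}=\sigma(\M(\RR),\BC(\RR))$, and the additional hypotheses on $X^{\circ},\tau^{\circ}$ needed in Lemma \ref{lem:DomAdBiContSemi} being part of Farkas' framework for $\BC(\Omega)$ with $\Omega$ Polish, cf.\ Theorem \ref{thm:DualCbM}), Lemma \ref{lem:DomAdBiContSemi} applies with $(A_*,\dom(A_*))=(\Delta,\BC^2(\RR))$ from Remark \ref{rem:PropGaussWei}(c), and, since $\Delta f=f''$ on $\RR$, gives
\[
\dom(A)=\left\{\mu\in\M(\RR):\ \exists\,\nu\in\M(\RR)\ \forall f\in\BC^2(\RR):\ \int_{\RR}f''\,\dd\mu=\int_{\RR}f\,\dd\nu\right\},\qquad A\mu=\nu .
\]

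It then remains to show that the set on the right is exactly $\{\mu\in\M(\RR):\mu\text{ twice Skorohod differentiable}\}$ and that the corresponding $\nu$ equals the second Skorohod derivative $\mathscr{D}^2\mu$ (which is the $\Delta\mu$ of Remark \ref{rem:SkoDiff}(c)). For the inclusion ``$\supseteq$'' I would take a twice Skorohod differentiable $\mu$ and apply the distributional characterisation of Remark \ref{rem:SkoDiff}(d) first to $\mu$ (with the test function $f'$) and then to the measure $\mathscr{D}\mu$, obtaining $\int_{\RR}f''\,\dd\mu=\int_{\RR}f\,\dd(\mathscr{D}^2\mu)$ for every $f\in\BC^{\infty}(\RR)$; upgrading this identity to all $f\in\BC^2(\RR)$ (see below) and setting $\nu:=\mathscr{D}^2\mu\in\M(\RR)$ places $\mu$ in $\dom(A)$ with $A\mu=\mathscr{D}^2\mu$. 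For ``$\subseteq$'', if $\mu\in\dom(A)$ with witness $\nu$, then restricting the displayed identity to $f\in\mathrm{C}_{\mathrm{c}}^{\infty}(\RR)\subseteq\BC^2(\RR)$ says precisely that the second distributional derivative of $\mu$ is the bounded measure $\nu$; the iterated distributional characterisation of Skorohod differentiability on $\RR$ (a routine consequence of Remark \ref{rem:SkoDiff}(b),(d); see also \cite[\S\,3.6--3.7]{Bog2010}) then forces $\mu$ to be twice Skorohod differentiable with $\mathscr{D}^2\mu=\nu=A\mu$.

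The step I expect to cost the most care is matching the test-function classes, since the abstract description comes equipped with $\BC^2(\RR)=\dom(A_*)$ whereas the Skorohod/distributional characterisations live on $\BC^{\infty}(\RR)$ or $\mathrm{C}_{\mathrm{c}}^{\infty}(\RR)$. To upgrade $\int f''\,\dd\mu=\int f\,\dd\nu$ from $\BC^{\infty}(\RR)$ to $\BC^2(\RR)$ I would mollify: for $f\in\BC^2(\RR)$ and a standard mollifier $(\rho_m)_{m\in\NN}$ put $f_m:=f\ast\rho_m\in\BC^{\infty}(\RR)$, so that $\|f_m\|_\infty\leq\|f\|_\infty$, $\|f_m''\|_\infty=\|f''\ast\rho_m\|_\infty\leq\|f''\|_\infty$, and $f_m\to f$, $f_m''=f''\ast\rho_m\to f''$ pointwise (indeed locally uniformly); since $\mu,\nu$ are finite (Radon) measures, dominated convergence lets one pass to the limit in $\int f_m''\,\dd\mu=\int f_m\,\dd\nu$. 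Alternatively, ``$\supseteq$'' can be obtained directly by two integrations by parts using the density description in Remark \ref{rem:SkoDiff}(b), the boundary terms vanishing because an $L^1$ density with first derivative of bounded variation tends to $0$ at $\pm\infty$ together with that derivative. Apart from this approximation the two inclusions combine to give $\dom(A)=\{\mu\in\M(\RR):\mu\text{ twice Skorohod differentiable}\}$ with $A\mu=\Delta\mu$, completing the proof.
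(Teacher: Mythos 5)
Your proposal is correct in outline and coincides with the paper's proof in its main structure: both identify $\dom(A)$ via Lemma \ref{lem:DomAdBiContSemi} applied to $(A_*,\dom(A_*))=(\Delta,\BC^2(\RR))$, and both handle the inclusion ``twice Skorohod differentiable $\Rightarrow$ $\mu\in\dom(A)$'' by the duality identity $\int_{\RR}\Delta f\,\dd\mu=\int_{\RR}f\,\dd(\Delta\mu)$. Where you diverge is the converse inclusion. The paper takes $\mu\in\dom(C)$, rewrites $\langle \Delta f,\mu\rangle$ as $\langle f,\lim_{t\to0}(T(t)\mu-\mu)/t\rangle$, concludes that the weak$^*$ derivative of the orbit exists, and invokes \cite[Thm.~3.6.4]{Bog2010} (the characterisation of Skorohod differentiability through convergence of difference quotients). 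You instead restrict the identity to $\mathrm{C}_{\mathrm{c}}^{\infty}(\RR)$, read off that the second distributional derivative of $\mu$ is the bounded measure $\nu$, and appeal to the distributional characterisation of Remark \ref{rem:SkoDiff}(d). Both routes lean on a citation to Bogachev at the decisive moment, and yours is arguably cleaner on one point the paper glosses over, namely the mismatch between the test-function classes $\BC^2(\RR)$, $\BC^{\infty}(\RR)$ and $\mathrm{C}_{\mathrm{c}}^{\infty}(\RR)$, which you resolve by mollification. The one step you should not label ``routine'' is the passage from ``$\mu\in\M(\RR)$ and $D^2\mu\in\M(\RR)$'' to ``twice Skorohod differentiable'': iterating Remark \ref{rem:SkoDiff}(d) requires knowing first that the \emph{first} distributional derivative $D\mu$ is itself a bounded measure, which does not follow formally from the hypothesis but needs an $\mathrm{L}^1$ Landau--Kolmogorov type interpolation inequality (or the corresponding statements in \cite[\S 3.6--3.7]{Bog2010}). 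With that intermediate fact made explicit, your argument closes; without it there is a small but real hole at exactly the point where the paper chose the difference-quotient route instead.
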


\begin{proof}
Let us denote the generator of our semigroup $(T(t))_{t\geq0}$ by $(C,\dom(C))$. By Lemma \ref{lem:DomAdBiContSemi} and Remark \ref{rem:PropGaussWei} we conclude that the domain of the generator is of the following form
\[
\dom(C)=\left\{\mu\in\M(\RR):\ \exists\nu\in\M(\RR)\ \forall f\in\BC^2(\RR):\ \left\langle f,\nu\right\rangle=\left\langle \Delta f,\mu\right\rangle\right\}.
\]
Now let $\mu\in\M(\RR)$ be twice Skorohod differentiable and denote its second derivative by $\nu:=\Delta\mu$. For $f\in\BC^2(\RR)$ one has
\[
\left\langle \Delta f,\mu\right\rangle=\int_\RR{\Delta f\ \dd\mu}=\int_{\RR}{f\ \dd\nu}=\left\langle f,\nu\right\rangle,
\]
showing that $\dom(A)\subseteq\dom(C)$, cf. Lemma \ref{lem:DomAdBiContSemi}. For the converse let $\mu\in\dom(C)$, i.e., there exists $\nu\in\M(\RR)$ such that for each $f\in\BC^2(\RR)$ holds that
\[
\left\langle f,\nu\right\rangle=\left\langle \Delta f,\mu\right\rangle=\left\langle \lim_{t\to0}{\frac{T_*(t)f-f}{t}},\mu\right\rangle=\left\langle f,\lim_{t\to0}{\frac{T(t)\mu-\mu}{t}}\right\rangle.
\]
From this we conclude that $\sigma(\M(\RR),\BC(\Omega))-\lim_{t\to0}{\frac{T(t)\mu-\mu}{t}}$ exists. Combining this with \cite[Thm.~3.6.4]{Bog2010} we obtain that $\mu\in\dom(A)$. This finishes the proof.
\end{proof}

\begin{remark}
In Remark \ref{rem:PropGaussWei} we observed that $\mathbb{C}_+\subseteq\rho(A)$ and since $\sigma(A_*)=\sigma(A)$ by \cite[Chapter II, Sect.~2.5]{EN}, we also obtain that $\mathbb{C}_+\subseteq\rho(A)$, especially $\lambda\in\rho(A)$ for $\lambda>0$. Moreover we observe that $(T(t))_{t\geq0}$ is a bounded semigroup and hence we also conclude that $\mathbb{C}_+\subseteq\rho(A)$.
\end{remark}

Now fix a positive, Lebesgue integrable (and unbounded) function $\psi:\RR\to\RR$ and define $B:\dom(A)\to\M(\RR)$ by 
\[
B\mu:=\psi\cdot\mu,\quad \mu\in\dom(A).
\]        
Observe that indeed $B\mu\in\M(\RR)$ by an application of Remark \ref{rem:SkoDiff}. To show that the operator $B$ satisfies the conditions of Theorem \ref{thm:BiMVPos} let $\lambda\in\rho(A)$, especially we can choose $\lambda\in\RR$ with $\lambda>0$ and observe that by the duality between $\BC(\RR)$ and $\M(\RR)$
\[
p_f(B\res(\lambda,A)\mu)=\left|\left\langle f,B\res(\lambda,A)\mu\right\rangle\right|=\left|\left\langle f\psi ,\res(\lambda,A)\mu\right\rangle\right|=\left|\left\langle \res(\lambda,A_*)(f\psi),\mu\right\rangle\right|
\]
for all $f\in\BC(\RR)$ and $\mu\in\dom(A)$. Moreover, by an application of Fubini's theorem and an explicit calculation of an integral one obtains
\begin{align*}
\left|\res(\lambda,A_*)(f\psi)(x)\right|&=\left|\int_0^{\infty}{\int_\RR{\ee^{-\lambda t}\varphi(x-y)f(y)\psi(y)\ \dd{y}\ \dd{t}}}\right|\\
&=\left|\int_\RR{\int_0^{\infty}{\ee^{-\lambda t}\varphi(x-y)f(y)\psi(y)\ \dd{t}\ \dd{y}}}\right|\\
&=\left|\int_\RR{\frac{1}{\sqrt{2\lambda}}\ee^{-\sqrt{2\lambda}\left|x-y\right|}}f(y)\psi(y)\ \dd{y}\right|\\
&=\left|(\xi_{\lambda}\ast f\psi)(x)\right|,
\end{align*}
where $\xi_{\lambda}(x):=\frac{1}{\sqrt{2\lambda}}\ee^{-\sqrt{2\lambda}\left|x\right|}$ is continuous and hence the convolution $\xi_{\lambda}\ast(f\psi)$ is continuous. Furthermore, by Young's convolution inequality and the assumption that $\psi$ is integrable we obtain
\[
\left\|\xi_{\lambda}\ast f\psi\right\|_{\infty}\leq\left\|\xi_{\lambda}\right\|_{\infty}\left\|f\psi\right\|_1\leq\left\|\xi_{\lambda}\right\|_{\infty}\left\|f\right\|_{\infty}\left\|\psi\right\|_1<\infty
\]
hence $h:=\xi_{\lambda}\ast f\psi\in\BC(\RR)$ and $p_f(B\res(\lambda,A)\mu)=p_h(\mu)$. By using that $\M(\RR)$ is the dual of $\BC(\RR)$ we obtain the following norm estimate
\[
\left\|B\res(\lambda,A)\mu\right\|=\sup_{\substack{f\in\BC(\RR)\\\left\|f\right\|_{\infty}\leq1}}{\left|\left\langle f,B\res(\lambda,A)\mu\right\rangle\right|}=\sup_{\substack{f\in\BC(\RR)\\\left\|f\right\|_{\infty}\leq1}}{\left|\left\langle \res(\lambda,A_*)f\psi,\mu\right\rangle\right|}\leq\sup_{\substack{f\in\BC(\RR)\\\left\|f\right\|_{\infty}\leq1}}{\left\|\res(\lambda,A_*)f\psi\right\|_{\infty}\left\|\mu\right\|}.
\]
Moreover,
\begin{align*}
\left|\res(\lambda,A_*)(f\psi)(x)\right|&=\left|\int_0^{\infty}\ee^{-\lambda t}{T}_*(t)f(x)\psi(x)\ \dd{t}\right|\\
&=\left|\int_0^{\infty}{\ee^{-\lambda t}\int_{\RR}\varphi_t(x-y)f(y)\psi(y)\ \dd{y} \dd{t}}\right|\\
&\leq\left\|f\right\|_{\infty}\int_0^{\infty}{\ee^{-\lambda t}(\varphi\ast\psi)(x)\ \dd{t}}\\
&\leq\left\|f\right\|_{\infty}\left\|\psi\right\|_1\int_0^{\infty}{\ee^{-\lambda t}\left\|\varphi_t\right\|_{\infty}\ \dd{t}}\\
&=\left\|f\right\|_{\infty}\left\|\psi\right\|_1\int_0^{\infty}{\frac{\ee^{-\lambda t}}{\sqrt{2\pi t}}\ \dd{t}}\\
&=\frac{\left\|f\right\|_{\infty}\left\|\psi\right\|_1}{\sqrt{2\lambda}}
\end{align*}
Hence, for $\lambda$ big enough we obtain $\left\|\res(\lambda,A_*)f\psi\right\|<1$ and hence $\left\|B\res(\lambda,A)\right\|<1$. Now we can apply Theorem \ref{thm:BiMVPos} and conclude that $A+B$ generates a positive bi-continuous semigroup on $\M(\RR)$.

\begin{remark}
Suppose that $f\in\mathrm{L}^p(\RR)$ is unbounded. Observe that there exists $g\in\mathrm{L}^1(\RR)$ and $h\in\mathrm{L}^{\infty}(\RR)$ such that $f=g+h$. In particular, we consider the operator $B:\dom(A)\to\M(\RR)$ defined by $B\mu:=f\cdot\mu$ as above. Since the case for $f\in\mathrm{L}^1(\RR)$ is treated before and $f\in\mathrm{L}^{\infty}(\RR)$ gives rise to a bounded perturbation we conclude that we can extend our previous result to the whole scale of $\mathrm{L}^p$-spaces.
\end{remark}

\section*{Acknowledgement}
\noindent The author would like to thank B\'{a}lint Farkas for the suggestion of the topic of this paper as well as for the fruitful discussions and the continuous support during writing.


\end{document}